\title{Markoff--Lagrange spectrum of one-sided shifts}
\author{Hajime Kaneko}
\address{Institute of Mathematics / Research Core for Mathematical Sciences, University of Tsukuba, Tsukuba, Japan}
\email{kanekoha@math.tsukuba.ac.jp}
\author{Wolfgang Steiner}
\address{Université Paris Cit\'e, CNRS, IRIF, F-75006 Paris, France}
\email{steiner@irif.fr}
\date{\today}
\thanks{This work was supported by the Agence Nationale de la Recherche through the projects CODYS (ANR-18-CE40-0007) and IZES (ANR-22-CE40-0011) and by the JSPS KAKENHI Grant Number 19K03439.}
\newtheorem{lemma}{Lemma}
\newtheorem{theorem}[lemma]{Theorem}
\newtheorem{proposition}[lemma]{Proposition}
\newcommand{\ba}{\mathbf{a}}
\newcommand{\bb}{\mathbf{b}}
\newcommand{\bc}{\mathbf{c}}
\newcommand{\be}{\mathbf{e}}
\newcommand{\bm}{\mathbf{m}}
\newcommand{\bsigma}{\boldsymbol{\sigma}}
\begin{document} 
\begin{abstract}
For the Lagrange spectrum and other applications, we determine the smallest accumulation point of binary sequences that are maximal in their shift orbits. 
This problem is trivial for the lexicographic order, and its solution is the fixed point of a substitution for the alternating lexicographic order. 
For orders defined by cylinders, we show that the solutions are $S$-adic sequences, where $S$ is a certain infinite set of substitutions that includes Sturmian morphisms. 
We also consider a similar problem for symmetric ternary shifts, which is applicable to the multiplicative version of the Markoff--Lagrange spectrum. 
\end{abstract}
\maketitle

\section{Introduction}
Let $A^\infty$ be the set of infinite words on a finite alphabet~$A$, equipped with a total order~$\le$ and the ultrametric~$d$ given by $d(a_1a_2\cdots,b_1b_2\cdots) = 2^{-\min\{n\ge 1\,:\,a_n\ne b_n\}}$ for $a_1a_2\cdots \ne b_1b_2\cdots$. 
We study properties of the set of \emph{$sup$-words}
\[
\mathcal{M}_\le := \{s_\le(\ba) \,:\, \ba \in A^\infty\}, \quad \mbox{with} \quad s_\le(a_1a_2\cdots) := \sup\nolimits_{n\ge1} a_na_{n+1}\cdots,
\]
for a large class of orders on~$A^\infty$.  
In particular, we are interested in the smallest accumulation point $\bm_\le$ of~$\mathcal{M}_\le$.
For the lexicographic order $\le_{\mathrm{lex}}$, words in $\mathcal{M}_{\le_{\mathrm{lex}}}$ occur as (quasi-greedy) $\beta$-expansions of~1 for real bases $\beta > 1$ (see \cite{Parry60}), with $\bm_{\le_{\mathrm{lex}}} = 1000\cdots$ being the limit as $\beta \to 1$.  
For the alternating lexicographic order~$\le_{\mathrm{alt}}$, most elements of $\mathcal{M}_{\le_{\mathrm{alt}}}$ are $(-\beta)$-expansions of $\frac{-\beta}{\beta+1}$ in the sense of \cite{Ito-Sadahiro09,Steiner13}, with $\bm_{\le_{\mathrm{alt}}}$ being the limit as $\beta \to 1$. 
An image of~$\mathcal{M}_{\le_{\mathrm{alt}}}$ occurs in a multiplicative version of the (Markoff--)Lagrange spectrum w.r.t.\ an integer base, which is defined in terms of well approximable numbers \cite{Dubickas06,Akiyama-Kaneko21}; see Proposition~\ref{p:Lbeta}.
Below the image of~$\bm_{\le_{\mathrm{alt}}}$, which is the fixed point of a substitution \cite{Allouche83,Allouche-Cosnard83,Dubickas07}, we find the discrete part of this spectrum.
The classical Markoff and Lagrange spectra are given by two-sided versions of $\mathcal{M}_{\le_{\mathrm{alt}}}$ (and the Lagrange spectrum is defined by $\limsup$ instead of $\sup$).
The unimodal order $\le_{\mathrm{uni}}$ yields kneading sequences of unimodal maps \cite{Milnor-Thurston88}, and $\bm_{\le_{\mathrm{uni}}}$ is the fixed point of the period-doubling (or Feigenbaum) substitution.
Sup-words are also closely related to infinite Lyndon words, which are
defined by $a_1 a_2 \cdots < a_n a_{n+1} \cdots$ for all $n \ge 2$;  see e.g.\ \cite{Postic-Zamboni20}.

We consider orders satisfying that
\begin{equation} \label{e:cylinderorder}
\ba \le \bb \le \bc \quad \mbox{implies} \quad d(\ba,\bb) \le d(\ba,\bc) \quad \mbox{for all $\ba, \bb, \bc \in A^\infty$}
\end{equation}
(note that $d(\ba,\bb) \le d(\ba,\bc)$ is equivalent to $d(\bb,\bc) \le d(\ba,\bc)$ by the strong triangle inequality), and we call them \emph{cylinder orders} because the elements of each cylinder of words are contiguous.
Here, the \emph{cylinder} (of length~$n$) given by $a_1\cdots a_n \in A^n$ is 
\[
[a_1\cdots a_n] := \{a'_1a'_2\cdots \in A^\infty \,:\, a'_1\cdots a'_n= a_1\cdots a_n\},
\]
and we write $[a_1\cdots a_n] < [b_1\cdots b_n]$ if $(a_1\cdots a_n)^\infty < (b_1\cdots b_n)^\infty$ (or, equivalently, $\ba < \bb$ for all $\ba \in [a_1\cdots a_n]$, $\bb \in [b_1\cdots b_n]$). 
Note that \eqref{e:cylinderorder} is equivalent to the condition (*) of \cite{Postic-Zamboni20}, and it includes generalized lexicographic orders as considered in \cite{Reutenauer06}.

In Section~\ref{sec:mark-lagr-spectr}, we give basic properties of $\mathcal{M}_{\le}$.
Section~\ref{sec:small-accum-point} contains our main result, an algorithm for determing the smallest accumulation point $\bm_\le$ of~$\mathcal{M}_\le$, for any cylinder order $(A^\infty, \le)$.
We also give a complete description, in terms of $S$-adic sequences, of all words $\bm_\le$ obtained by cylinder orders and of the discrete part of~$\mathcal{M}_{\le}$, i.e., all its elements below~$\bm_\le$.
Since the words $\bm_\le$ have linear factor complexity, real numbers having such $\beta$-expansions are in $\mathbb{Q}(\beta)$ or transcendental, for all Pisot or Salem bases $\beta \ge 2$.
In Section~\ref{sec:examples}, we determine $\bm_\le$ for some classical examples of cylinder orders and show that all maximal Sturmian sequences can occur. 
We consider cylinder orders on symmetric alphabets in Section~\ref{sec:symm-shift-spac} and apply our results to the multiplicative Lagrange spectrum and other problems in Section~\ref{sec:exampl-orders-symm},

\section{Markoff--Lagrange spectrum} \label{sec:mark-lagr-spectr}
We first show that the set of periodic words in $\mathcal{M}_\le$ is dense, and that $\mathcal{M}_\le$ is equal to 
\[
\mathcal{L}_\le := \{\ell_\le(\ba) \,:\, \ba \in A^\infty\}, \quad \mbox{with} \quad \ell_\le(a_1a_2\cdots) := \limsup\nolimits_{n\to\infty} a_na_{n+1}\cdots.
\]
Note that $\mathcal{M}_\le$ and $\mathcal{L}_\le$ can be seen as generalizations of the \emph{Markoff} and \emph{Lagrange spectrum} respectively. 
In the classical case, these spectra are defined by two-sided sequences, and 
the Lagrange spectrum is a strict subset of the Markoff spectrum \cite{Freiman68,Cusick-Flahive89}.

\begin{theorem} \label{t:ML}
Let $\le$ be a cylinder order on $A^\infty$.
Then
\[
\mathcal{L}_\le = \mathcal{M}_\le = \mathrm{cl}\{s_\le(\ba) \,:\, \ba \in A^\infty\, \mbox{purely periodic}\}.
\]
In particular, the set $\mathcal{M}_\le$ is closed. 
\end{theorem}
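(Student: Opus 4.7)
My plan is to establish the chain of inclusions
\[
\mathrm{cl}\{s_\le(\ba) : \ba \in A^\infty\text{ purely periodic}\} \subseteq \mathcal{L}_\le \subseteq \mathcal{M}_\le \subseteq \mathrm{cl}\{s_\le(\ba) : \ba \in A^\infty\text{ purely periodic}\},
\]
from which the equality of all three sets and the closedness of~$\mathcal{M}_\le$ follow. The preliminary observation is that~\eqref{e:cylinderorder} makes the order closed in the product topology on $A^\infty \times A^\infty$: if $\ba_n \le \bb_n$ with $\ba_n \to \ba$ and $\bb_n \to \bb$ but $\ba > \bb$, then the first position of disagreement between $\ba$ and $\bb$ determines cylinders that eventually contain $\ba_n$ and $\bb_n$, forcing $\ba_n > \bb_n$, a contradiction. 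This closedness makes $s_\le$ well-defined on $A^\infty$ and yields the characterization $\mathcal{M}_\le = \{\bb \in A^\infty : T^k \bb \le \bb \text{ for every } k \ge 0\}$, since each $T^k s_\le(\ba)$ is a limit of shifts of $\ba$ and hence $\le s_\le(\ba)$. In particular, $\mathcal{M}_\le$ is closed.

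For $\mathcal{L}_\le \subseteq \mathcal{M}_\le$, I would use that $\ell_\le(\ba) = \limsup_n T^n \ba$ is the largest accumulation point of the orbit $(T^n \ba)_n$. Continuity of the shift then makes each $T^k \ell_\le(\ba)$ an accumulation point of this orbit as well, so $T^k \ell_\le(\ba) \le \ell_\le(\ba)$, whence $\ell_\le(\ba) \in \mathcal{M}_\le$ by the above characterization.

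The central step is density of periodic sup-words in $\mathcal{M}_\le$: given $\bb \in \mathcal{M}_\le$ and $N \ge 1$, I would take $\ba_M := (b_1 \cdots b_M)^\infty$ and $\bc_M := s_\le(\ba_M) = T^{j_M} \ba_M$ for $M$ large and some $j_M \in [0, M)$; each $\bc_M$ lies in the periodic set by construction. Closedness of $\le$ combined with $\bc_M \ge \ba_M \to \bb$ forces any accumulation point of $(\bc_M)$ to be $\ge \bb$. A case analysis on the asymptotics of $(j_M, M - j_M)$ identifies the limit along a suitable subsequence: when $j_M$ is bounded, $\bc_M \to T^{j^*}\bb \le \bb$, yielding equality; when $M - j_M \to \infty$, $\bc_M$ shares an increasingly long initial segment with $T^{j_M}\bb$ and therefore converges to a limit of shifts of $\bb$, again $\le \bb$. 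The residual case (both $j_M \to \infty$ and $M - j_M$ bounded) is the main obstacle: the maximality of $\bc_M$ among the cyclic conjugates of $\ba_M$, combined with $T^{j_M}\bb \le \bb$, forces $b_{j_M + r} = b_r$ for $r \le M - j_M$, which pins the limit to a periodic word; by the cylinder property this must either coincide with $\bb$ or make $\bb$ itself purely periodic, either way belonging to the desired periodic set.

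Finally, for $\mathrm{cl}\{s_\le(\ba) : \ba \text{ purely periodic}\} \subseteq \mathcal{L}_\le$, given $\bb = \lim_k \bb_k$ with $\bb_k = (u_k)^\infty$ shift-maximal, I would construct $\ba := u_1^{r_1} u_2^{r_2} \cdots$ with integers $r_k$ growing fast enough. Interior shifts within the $k$-th block $u_k^{r_k}$ accumulate to cyclic shifts $T^j \bb_k$, all of which are $\le \bb_k \to \bb$, so no accumulation point of the orbit of~$\ba$ exceeds~$\bb$; the shift starting precisely at a block boundary reads a long prefix of~$\bb_k$, making $\bb$ itself an accumulation point. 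Sufficiently fast growth of $r_k$ ensures that shifts straddling boundaries contribute only accumulation points $\le \bb$, giving $\ell_\le(\ba) = \bb$.
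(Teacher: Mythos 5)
Your overall architecture is sound and largely parallels the paper: the closedness of the order under limits is the content of Lemma~\ref{l:cylinderorder2}, your characterization $\mathcal{M}_\le=\{\bb : T^k\bb\le\bb \text{ for all } k\}$ (writing $T$ for the shift, as you do) and the resulting closedness of $\mathcal{M}_\le$ match the paper, the inclusion $\mathcal{L}_\le\subseteq\mathcal{M}_\le$ is the paper's one-line observation $s_\le(\ell_\le(\ba))=\ell_\le(\ba)$, and your concatenation $u_1^{r_1}u_2^{r_2}\cdots$ for $\mathrm{cl}\{\dots\}\subseteq\mathcal{L}_\le$ is a variant of the paper's word $\bb=u_1u_2\cdots$ with the same boundary-straddling issue handled by the same kind of estimate. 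The gap is in the residual case of your central step. There, along a subsequence with $M-j_M=r^*$ fixed, your own observation $b_{j_M+r}=b_r$ for $r\le M-j_M$ gives $\bc_M=b_1\cdots b_{r^*}(b_1\cdots b_M)^\infty$, whose limit is $b_1\cdots b_{r^*}\bb$. This word is indeed $\ge\bb$, as every accumulation point must be, but it is not periodic unless $\bb$ is, and it equals $\bb$ only when $\bb=(b_1\cdots b_{r^*})^\infty$; the assertion that the limit is ``pinned to a periodic word'' is unsubstantiated, and nothing in the cylinder property forces $b_1\cdots b_{r^*}\bb=\bb$. Concretely, for the lexicographic order and $\bb=11(01)^\infty\in\mathcal{M}_{\le_{\mathrm{lex}}}$, every even $M$ gives $j_M=M-1$, $M-j_M=1$, and $\bc_M\to 1\bb>_{\mathrm{lex}}\bb$; for this $\bb$ the odd values of $M$ give $j_M=0$ and rescue you via your first case, but you offer no reason why such a rescue is always available.

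What is missing is exactly the paper's key device. For non-periodic $\ba\in\mathcal{M}_\le$ there are infinitely many ``closest return times'' $n$, namely those satisfying \eqref{e:rn}, and for each such $n$ one shows via \eqref{e:cylinderorder2} that every cyclic shift $a_{i+1}\cdots a_n(a_1\cdots a_n)^\infty$ inherits the inequality $a_{i+1}a_{i+2}\cdots\le\ba$, so that $(a_1\cdots a_n)^\infty$ is already shift-maximal, i.e., $j_n=0$ for infinitely many~$n$. That is precisely the statement that your first case always occurs along a subsequence and that the residual case is vacuous --- but it is a genuine argument requiring the comparison of distances $2^{-i}d(\ba,a_{i+1}a_{i+2}\cdots)$, not something that follows from compactness or from maximality among cyclic conjugates alone. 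Either import that argument (which would also let you drop the case analysis entirely, since you could take $\bc_M=\ba_M$ along those $M$) or repair the residual case directly; as written, the density of periodic sup-words in $\mathcal{M}_\le$ is not established.
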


In the proof of the theorem, we use the following characterization of cylinder orders.

\begin{lemma} \label{l:cylinderorder2}
An order $\le$ on $A^\infty$ is a cylinder order if and only if
\begin{equation} \label{e:cylinderorder2}
\ba \le \bb \ \ \mbox{implies} \ \ \ba' \le \bb' \quad \mbox{for all $\ba,\bb,\ba', \bb' \in A^\infty$ with $\max(d(\ba,\ba'), d(\bb,\bb')) < d(\ba,\bb)$}.
\end{equation}
\end{lemma}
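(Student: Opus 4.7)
The plan is to prove both implications by contradiction, leaning on the ultrametric ``isosceles'' principle: if two sides of a triangle in $(A^\infty,d)$ have different lengths, then the third equals the larger one. Throughout, set $\delta := d(\ba,\bb)$.

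\emph{Forward direction} (\eqref{e:cylinderorder}$\Rightarrow$\eqref{e:cylinderorder2}). Given $\ba\le\bb$ and $\ba',\bb'\in A^\infty$ with $\max(d(\ba,\ba'),d(\bb,\bb'))<\delta$, the case $\ba=\bb$ is vacuous, so assume $\ba<\bb$. The isosceles principle applied to the triples $(\ba,\bb,\bb')$, $(\ba,\ba',\bb)$, and $(\ba,\ba',\bb')$ in turn forces $d(\ba,\bb')=d(\ba',\bb)=d(\ba',\bb')=\delta$. Suppose for contradiction that $\bb'<\ba'$. If $\bb\le\ba'$, then \eqref{e:cylinderorder} along $\ba\le\bb\le\ba'$ would give $\delta=d(\ba,\bb)\le d(\ba,\ba')<\delta$; hence $\ba'<\bb$. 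Symmetrically, if $\bb'\le\ba$, then \eqref{e:cylinderorder} along $\bb'\le\ba\le\bb$ would give $\delta=d(\bb',\ba)\le d(\bb',\bb)<\delta$; hence $\ba<\bb'$. Combined with $\bb'<\ba'$, this yields the chain $\ba<\bb'<\ba'<\bb$, and a final application of \eqref{e:cylinderorder} along $\ba\le\bb'\le\ba'$ produces $\delta=d(\ba,\bb')\le d(\ba,\ba')<\delta$, the desired contradiction.

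\emph{Converse} (\eqref{e:cylinderorder2}$\Rightarrow$\eqref{e:cylinderorder}). Let $\ba\le\bb\le\bc$. If $\bb=\bc$ there is nothing to show, so assume $\bb<\bc$, and suppose for contradiction that $d(\ba,\bc)<d(\ba,\bb)$. Apply \eqref{e:cylinderorder2} to the inequality $\ba\le\bb$, instantiating the conclusion variables as $\ba'=\bc$ and $\bb'=\bb$. The side condition $\max(d(\ba,\bc),d(\bb,\bb))=d(\ba,\bc)<d(\ba,\bb)$ holds by assumption, so \eqref{e:cylinderorder2} delivers $\bc\le\bb$, contradicting $\bb<\bc$ by antisymmetry of the total order.

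\emph{Main obstacle.} The only slightly delicate step is the four-way case analysis in the forward direction; the observation that unlocks it is that, once the perturbations $\ba',\bb'$ are within distance $<\delta$ of $\ba,\bb$, all three ``cross'' distances $d(\ba,\bb')$, $d(\ba',\bb)$, $d(\ba',\bb')$ collapse to $\delta$, so every appeal to the cylinder-order axiom along a chain of three of these points reduces to the same forbidden inequality $\delta\le(\text{something}<\delta)$.
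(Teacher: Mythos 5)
Your proof is correct and follows essentially the same route as the paper's: the converse direction is the identical instantiation $\ba'=\bc$, $\bb'=\bb$, and the forward direction likewise combines the ultrametric isosceles principle (all cross-distances equal $d(\ba,\bb)$) with applications of \eqref{e:cylinderorder} along three-point chains to exclude the bad orderings. The only cosmetic difference is that the paper rules out $\bb'\le\ba'\le\bb$ directly where you first establish $\ba<\bb'$ and then use the chain $\ba\le\bb'\le\ba'$.
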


\begin{proof}
Let $\le$ be a cylinder order, and $\ba,\bb,\ba',\bb' \in A^\infty$ such that $\ba \le \bb$, $d(\ba,\ba') < d(\ba,\bb)$, $d(\bb,\bb') < d(\ba,\bb)$.
Then both $\ba \le \bb \le \ba'$ and $\bb' \le \ba' \le \bb$ are impossible by~\eqref{e:cylinderorder}, using that $d(\ba',\bb') = d(\ba,\bb)$ by the strong triangle inequality. 
This implies that $\bb > \ba'$ and thus $\bb' > \ba'$, i.e., \eqref{e:cylinderorder2} holds. 
Let now $\le$ be an order satisfying \eqref{e:cylinderorder2}.
Then $\ba \le \bb$ and $d(\ba,\bc) < d(\ba,\bb)$ imply that $\bc
< \bb$, thus $\ba \le \bb \le \bc$ with $d(\ba,\bc) < d(\ba,\bb)$ is impossible, i.e., \eqref{e:cylinderorder} holds. 
\end{proof}

\begin{proof}[Proof of Theorem~\ref{t:ML}]
We show first that $\mathcal{M}_\le$ is the closure of $s_\le(\ba')$ with purely periodic $\ba' \in A^\infty$.
Since $s_\le(s_\le(\ba)) = s_\le(\ba)$, it suffices to consider $\ba \in A^\infty$ with $s_\le(\ba) = \ba$.  
We have $(a_1 \cdots a_n)^\infty \in \mathcal{M}_\le$ whenever
\begin{equation} \label{e:rn}
2^{-n} d(\ba, a_{n+1}a_{n+2}\cdots) < 2^{-i} d(\ba, a_{i+1}a_{i+2}\cdots) \quad \mbox{for all}\ 1 \le i < n.
\end{equation}
Indeed, we have, for all $1 \le i < n$, that $a_{i+1} a_{i+2} \cdots \le \ba$ (because $s_\le(\ba) = \ba$) and 
\[
\begin{aligned}
d(\ba, a_{i+1} a_{i+2} \cdots) & > 2^{i-n} d(\ba, a_{n+1}a_{n+2}\cdots) = 2^{i-n} d((a_1\cdots a_n)^\infty, a_{n+1}a_{n+2}\cdots) \\
& = 2^i d((a_1\cdots a_n)^\infty, \ba) = d(a_{i+1} \cdots a_n (a_1\cdots a_n)^\infty, a_{i+1} a_{i+2} \cdots),
\end{aligned}
\]
hence \eqref{e:cylinderorder2} gives that $a_{i+1} \cdots a_n (a_1\cdots a_n)^\infty \le (a_1\cdots a_n)^\infty$, thus $(a_1 \cdots a_n)^\infty \in \mathcal{M}_\le$.
Since $\lim_{n\to\infty} 2^{-n} d(\ba, a_{n+1}a_{n+2}\cdots) = 0$, we have either $d(\ba, a_{n+1}a_{n+2}\cdots) = 0$ for some $n \ge 1$, i.e., $\ba$ is purely periodic, or infinitely many $n$ such that \eqref{e:rn} holds and thus $(a_1\cdots a_n)^\infty \in \mathcal{M}_\le$ infinitely often. 
Therefore, $\mathcal{M}_\le \subseteq \mathrm{cl}\{s_\le(\ba) : \ba \in A^\infty\, \mbox{purely periodic}\}$.
For the opposite inclusion, we have to show that $\mathcal{M}_\le$ is closed.
Consider $\ba = \lim_{k\to\infty} \ba^{(k)}$ with $\ba^{(k)} \in \mathcal{M}_\le$.
If $a_{n+1} a_{n+2} \cdots > \ba$ for some $n \ge 1$, then we also had
$a_{n+1}^{(k)} a_{n+2}^{(k)} \cdots > \ba^{(k)}$ for all large enough~$k$, contradicting that $\ba^{(k)} \in \mathcal{M}_\le$. 
This implies that $s_\le(\ba) = \ba$, i.e., $\ba \in \mathcal{M}_\le$. 

\smallskip
Since $s_\le(\ell_\le(\ba)) = \ell_\le(\ba)$ for all $\ba \in A^\infty$, we have $\mathcal{L}_\le \subseteq \mathcal{M}_\le$. 
For the opposite inclusion, let $\ba = \lim_{k\to\infty} \ba^{(k)}$ for some purely periodic words $\ba^{(k)} \in \mathcal{M}_\le$, let $(p_k)$ be an increasing sequence satisfying $\ba^{(k)} = (a_1^{(k)}\cdots a_{p_k}^{(k)})^\infty$, and let $\bb = a_1^{(1)}\cdots a_{p_1}^{(1)} a_1^{(2)}\cdots a_{p_2}^{(2)} \cdots$.
Then $\ell_\le(\bb) \ge \ba$.
If $a_{i+1}^{(k)}\cdots a_{p_k}^{(k)} a_1^{(k+1)}\cdots a_{p_{k+1}}^{(k+1)} \cdots > \ba$ for some $k \ge 1$, $0 \le i < p_k$, then 
\begin{equation} \label{e:dab}
\delta_{i,k} := d(a_{i+1}^{(k)}\cdots a_{p_k}^{(k)} a_1^{(k+1)}\cdots a_{p_{k+1}}^{(k+1)} \cdots, \ba) \le \max(d(\ba^{(k)}, \ba), d(\ba^{(k)}, \ba^{(k+1)}), 2^{-p_k}).
\end{equation}
Indeed, for $1 \,{\le}\, j \,{\le}\, p_k{-}i$, we cannot have $[a_{i+1}^{(k)} \cdots a_{i+j}^{(k)}] \,{>}\, [a_1 \cdots a_j] \,{=}\, [a_1^{(k)} \cdots a_j^{(k)}]$ because this contradicts $\ba^{(k)} \,{\in}\, \mathcal{M}_\le$, thus $\delta_{i,k} \,{=}\, 2^{-j}$ implies $d(\ba^{(k)}, \ba) \,{\ge}\, 2^{-j}$; similarly, for $p_k{-}i \,{<}\, j \,{\le}\, 2p_k{-}i$, 
\[
[a_{i+1}^{(k)} \cdots a_{p_k}^{(k)}a_1^{(k)} \cdots a_{i+j-p_k}^{(k)}] = [a_{i+1}^{(k)} \cdots a_{p_k}^{(k)}a_1^{(k+1)} \cdots a_{i+j-p_k}^{(k+1)}] > [a_1 \cdots a_j] = [a_1^{(k)} \cdots a_j^{(k)}]
\]
is impossible, thus $\delta_{i,k} = 2^{-j}$ implies $d(\ba^{(k)}, \ba^{(k+1)}) \ge 2^{p_k-i-j}$ or $d(\ba^{(k)}, \ba) \ge 2^{-j}$; hence, we have $\delta_{i,k} < 2^{i-2p_k}$ or $\delta_{i,k} \le 2^{i-p_k} d(\ba^{(k)}, \ba^{(k+1)})$ or $\delta_{i,k} \le d(\ba^{(k)}, \ba)$, which implies~\eqref{e:dab}. 
Since the right hand side of \eqref{e:dab} tends to 0 as $k \to \infty$, we have $\ell_\le(\bb) \le \ba$, thus $\ba = \ell_\le(\bb) \in \mathcal{L}_\le$. 
\end{proof}

\section{Smallest accumulation point of $\mathcal{M}_\le$} \label{sec:small-accum-point}
For determining the smallest accumulation point~$\bm_{\le}$ of $\mathcal{M}_\le$ for a cylinder order $(A^\infty,\le)$, we can restrict to two-letter alphabets, w.l.o.g., $A = \{0,1\}$.
Indeed, if $\{0,1\} \subseteq A$, $[0] < [1]$, then $\mathcal{M}_\le$ has the accumulation point $10^\infty$ (because $(10^n)^\infty \in \mathcal{M}_\le$ for all $n \ge 1$), and we clearly have $s_\le(\ba) > 10^\infty$ if $\ba \in A^\infty$ contains a letter $a_n \in A$ with $[a_n] > [1]$. 

We use substitutions (also called word morphisms) and limit words (or $S$-adic sequences). 
Let $A^*$ be the monoid of finite words over the alphabet~$A$, with concatenation as operation.
A~\emph{substitution} $\sigma: A^* \to A^*$ satisfies $\sigma(vw) = \sigma(v) \sigma(w)$ for all $v,w \in A^*$ and is extended naturally to~$A^\infty$; it suffices to give $\sigma(a)$ for $a \in A$ to define~$\sigma$. 
For a sequence $\bsigma = (\sigma_n)_{n\ge1}$ of substitutions on the alphabet~$A$ and an infinite word $\ba \in A^\infty$, the \emph{limit word} is
\[
\bsigma(\ba) := \lim_{n\to\infty} \sigma_{[1,n]}(\ba),
\]
if this limit exists; we use the notation $\sigma_{[1,n]} := \sigma_1 \circ \sigma_2 \circ \cdots \circ \sigma_n$ for $n \ge 0$, with $\sigma_{[1,0]}$ being the identity map.
For a set of substitutions~$S$, we denote the monoid generated by the composition of substitutions in~$S$ by~$S^*$.
We use the set of substitutions 
\[
S = \{\tau_{j,k} \,:\, 0 \le j < k\}, \quad \mbox{with} \quad \begin{array}{rl}\tau_{j,k}: & 0 \mapsto 10^j, \\[.5ex] & 1 \mapsto 10^k.\end{array}
\]

Our main result is the following characterization of the smallest accumulation point~$\bm_\le$ and the discrete part of $\mathcal{M}_{\le}$ for cylinder orders~$\le$. 

\begin{theorem} \label{t:main}
Let $\bm \in \{0,1\}^\infty$. 
Then $\bm = \bm_\le$ for some cylinder order $\le$ on $\{0,1\}^\infty$ with $0^\infty < 1^\infty$ if and only if $\bm = \sigma(10^\infty)$ for some $\sigma \in S^*$ or $\bm = \bsigma(1^\infty)$ for some $\bsigma \in S^\infty$. 

\smallskip
If $\bm_\le = \bsigma(1^\infty)$, $\bsigma = (\sigma_n)_{n\ge1} \in S^\infty$, then 
\begin{equation} \label{e:discrete}
\{\ba \in \mathcal{M}_\le \,:\, \ba < \bm_\le\} = \{(\sigma_{[1,n]}(0))^\infty \,:\, n \ge 0\}.
\end{equation}

If $\bm_\le = \sigma_{[1,h]}(10^\infty)$, $\sigma_1,\dots,\sigma_h \in S$, $h \ge 0$, then 
\begin{equation} \label{e:discrete2}
\begin{aligned}
\{\ba \in \mathcal{M}_\le \,:\, \ba < \bm_\le\} & = \{(\sigma_{[1,n]}(0))^\infty \,:\, 0 \le n \le h\} \\
& \quad \cup \{\sigma_{[1,h]}((10^j)^\infty) \,:\, j \ge 0,\, \sigma_{[1,h]}((10^j)^\infty) < \bm_\le\},
\end{aligned}
\end{equation}
and there is at most one $j \ge 0$ such that $\sigma_{[1,h]}((10^j)^\infty) < \bm_\le$.
\end{theorem}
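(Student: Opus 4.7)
The plan is to prove Theorem~\ref{t:main} via a desubstitution procedure that reads off one substitution $\tau_{j_n,k_n} \in S$ at a time from the structure of~$\bm_\le$ and the order~$\le$, and then iterates.

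First, I would observe that $(10^n)^\infty \in \mathcal{M}_\le$ for every $n \ge 0$ (using $[0] < [1]$), so $10^\infty$ is an accumulation point of~$\mathcal{M}_\le$ and hence $\bm_\le \le 10^\infty$. If $\bm_\le = 10^\infty$, we are in the case $h = 0$ of the finite form. Otherwise $\bm_\le = 1 0^{k_1} 1 \ldots$ for some $k_1 \ge 1$, and the key structural lemma I would prove is that every block of $0$s in $\bm_\le$ has length in $\{j_1, k_1\}$ for some $j_1 < k_1$: any deviating block length would produce a purely periodic sup-word whose shift orbit accumulates strictly below~$\bm_\le$, contradicting its minimality. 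This yields a unique $\bm' \in \{0,1\}^\infty$ with $\bm_\le = \tau_{j_1,k_1}(\bm')$.

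Next I would define a derived cylinder order $\le'$ on $\{0,1\}^\infty$ by pulling back $\le$ through $\tau_{j_1,k_1}$, verify it satisfies~\eqref{e:cylinderorder2}, and show $\bm' = \bm_{\le'}$. Iterating produces a finite or infinite sequence $\bsigma = (\sigma_n)$ of substitutions in~$S$: termination at step~$h$ with derived word $10^\infty$ yields $\bm_\le = \sigma_{[1,h]}(10^\infty)$, while non-termination yields $\bm_\le = \bsigma(1^\infty)$; the latter limit is well-defined because each $\sigma_{[1,n]}(1)$ is a proper prefix of $\sigma_{[1,n+1]}(1)$ and matches a growing initial segment of~$\bm_\le$. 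For the converse direction, given any $\sigma \in S^*$ or $\bsigma \in S^\infty$, I would construct a cylinder order explicitly (e.g.\ by iterated pushforward from a base lexicographic order) from which the algorithm recovers the prescribed substitutions.

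For the discrete part, I would show by induction that at step~$n$ the element $(\sigma_{[1,n]}(0))^\infty = \sigma_{[1,n-1]}((10^{j_n})^\infty)$ is a newly isolated point of $\mathcal{M}_\le$ below the $n$-th intermediate accumulation point; in the infinite case these exhaust $\{\ba \in \mathcal{M}_\le : \ba < \bm_\le\}$, giving~\eqref{e:discrete}. In the finite case, after $h$ steps one is reduced to analysing the derived order $\le^{(h)}$ with $\bm_{\le^{(h)}} = 10^\infty$, whose elements of $\mathcal{M}_{\le^{(h)}}$ below $10^\infty$ are $0^\infty$ together with at most one $(10^{j^*})^\infty$; two distinct values $j < j'$ with both $(10^j)^\infty, (10^{j'})^\infty <^{(h)} 10^\infty$ would, via~\eqref{e:cylinderorder2}, force a further accumulation point strictly below~$10^\infty$, contradicting minimality of $\bm_{\le^{(h)}}$. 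Pushing forward by $\sigma_{[1,h]}$ then yields~\eqref{e:discrete2}. The \emph{main obstacle} is the structural two-value block-length lemma, which underpins the entire procedure: its proof requires showing that any rogue $0$-block length in~$\bm_\le$ generates a sup-word in~$\mathcal{M}_\le$ whose shift orbit accumulates strictly below~$\bm_\le$, using Theorem~\ref{t:ML} to exploit closedness of~$\mathcal{M}_\le$. A secondary technical task is to verify that the derived order $\le'$ is indeed a cylinder order and that $\bm_{\le'} = \bm'$.
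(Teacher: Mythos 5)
Your overall strategy -- read off one substitution $\tau_{j,k}\in S$ at a time, pull the order back through it, iterate, and construct orders explicitly for the converse -- is exactly the paper's (Proposition~\ref{p:main} is precisely your desubstitution step). The difference, and the place where your sketch has a genuine gap, is \emph{where} the two-block-length structure is established. You derive it only for the single word $\bm_\le$, from its minimality as an accumulation point. What the recursion actually needs is the stronger statement that \emph{every} word of $\mathcal{M}_\le$ in the cylinder $[10^{k_1}1]$ containing $\bm_\le$ is a concatenation of the blocks $10^{j_1}$ and $10^{k_1}$, i.e.\ $\mathcal{M}_\le\cap[10^{k_1}1]\subseteq\tau_{j_1,k_1}([1])$. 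Without this you cannot show that $\bm'=\bm_{\le'}$: the elements of $\mathcal{M}_\le$ accumulating at $\bm_\le$ need not a priori lie in $\tau_{j_1,k_1}(\{0,1\}^\infty)$, so they need not pull back to elements of $\mathcal{M}_{\le'}$ accumulating at $\bm'$, and the recursion does not close; the same inclusion is also what yields the discrete parts \eqref{e:discrete} and \eqref{e:discrete2}. Moreover, your structural lemma leaves $j_1$ undetermined when $\bm_\le$ happens to use only one block length (you must separately exclude $\bm_\le=(10^{k_1})^\infty$, which is of neither form in the theorem), and it cannot determine the relations $[10^i1]$ versus $[10^i0]$ for lengths $i<k_1$ that do not occur in $\bm_\le$ -- yet these relations are exactly what forces all of $\mathcal{M}_\le\cap[10^{k_1}1]$ into $\tau_{j_1,k_1}([1])$.

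The paper avoids all of this by defining $j$ and $k$ from the \emph{order} rather than from $\bm_\le$: they are the two smallest indices $i$ with $[10^i1]<[10^i0]$. Minimality of $j$ and $k$ immediately rules out every other block length in every word of $\mathcal{M}_\le\cap[10^k1]$ (a suffix $10^i1\cdots$ with $i\notin\{j,k\}$, $i<k$, or a suffix in $[10^k0]$, would exceed the word itself), and the periodic words $\tau_{j,k}((10^n)^\infty)\in\mathcal{M}_\le$ locate $\bm_\le$ in $[10^k1]$. Your contradiction argument for the block lemma (producing periodic sup-words $\tau_{i_1,i_2}((10^n)^\infty)$ accumulating below $\bm_\le$) is sound as far as it goes, and the missing neighbourhood statement could in principle be patched via the density of periodic sup-words from Theorem~\ref{t:ML}; but as written, the step you call a ``secondary technical task'' ($\bm'=\bm_{\le'}$) is in fact the crux, and your key lemma is too weak to deliver it.
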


The following proposition constitutes the core of the proof of Theorem~\ref{t:main} and provides an algorithm for calculating~$\bm_\le$. 

\begin{proposition} \label{p:main}
Let $\le$ be a cylinder order on $\{0,1\}^\infty$ with $[0] < [1]$. 

If $[10^j1] < [10^j0]$ holds for at most one $j \ge 0$, then $\bm_\le = 10^\infty$, and $\ba < \bm_\le$ implies that $\ba = 0^\infty$ or $\ba = (10^j)^\infty$ (in case $[10^j1] < [10^j0]$).

Otherwise, we have $\bm_\le = \tau_{j,k}(\bm_\preceq)$ for the cylinder order~$\preceq$ on $\{0,1\}^\infty$ defined by $\ba \preceq \bb$ if $\tau_{j,k}(\ba) \le \tau_{j,k}(\bb)$, where $j,k$ are minimal such that $0 \le j < k$, $[10^j1] < [10^j0]$, $[10^k1] < [10^k0]$; we have $[0] \prec [1]$ and $\{\ba \in \mathcal{M}_\le \,:\, \ba < \bm_\le\} = \{0^\infty\} \cup \{\tau_{j,k}(\ba) \,:\, \ba \in \mathcal{M}_\preceq,\, \ba \prec \bm_\preceq\}$. 
\end{proposition}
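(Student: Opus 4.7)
The plan is to handle the proposition's dichotomy separately. The trivial case yields $\bm_\le = 10^\infty$ once I classify the sub-$10^\infty$ part of $\mathcal{M}_\le$; the nontrivial case reduces to the same problem for $\preceq$ via $\tau_{j,k}$.

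For the trivial case, let $\ba \in \mathcal{M}_\le$ with $\ba \ne 0^\infty$ and $\ba < 10^\infty$; write $\ba = 10^{j_1}10^{j_2}\cdots$. Comparison at position $j_1+2$ gives $[10^{j_1}1] < [10^{j_1}0]$, forcing $j_1 = j_0$ (the unique contracting index, which must therefore exist). Each tail $10^{j_i}10^{j_{i+1}}\cdots$ is a shift of $\ba$ and so $\le \ba < 10^\infty$; iterating gives $j_i = j_0$ for all $i$ (the case where $\ba$ has only finitely many $1$s is excluded by the analogous comparison on $10^{j_M}0^\infty$ versus $\ba$), so $\ba = (10^{j_0})^\infty$. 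Thus $\{\ba \in \mathcal{M}_\le : \ba < 10^\infty\} \subseteq \{0^\infty, (10^{j_0})^\infty\}$. Both are isolated: $0^\infty$ because any $\ba \in \mathcal{M}_\le \cap [0^n]$ equals $0^\infty$ (else a shift past the initial zeros lies in $[1] > [0]$), and if $j_0$ exists, $(10^{j_0})^\infty$ because any $\ba \in \mathcal{M}_\le \cap [(10^{j_0})^K]$ with $K \ge 2$ lies below $10^\infty$ (same position-$j_0{+}2$ comparison) and hence equals $(10^{j_0})^\infty$ by the above. Since $(10^n)^\infty \to 10^\infty$ exhibits $10^\infty$ as an accumulation point of $\mathcal{M}_\le$, we conclude $\bm_\le = 10^\infty$.

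For the nontrivial case I proceed in four steps. \emph{(i) Setup:} $\preceq$ is a total cylinder order with $[0] \prec [1]$. Totality and injectivity come from injectivity of $\tau_{j,k}$. For the cylinder property I apply Lemma~\ref{l:cylinderorder2}: when $\ba,\ba'$ agree on one more letter than $\ba,\bb$, the images $\tau_{j,k}(\ba),\tau_{j,k}(\ba')$ gain at least $j+1$ extra common characters, strictly more than the $j+1$-character segment at which $\tau_{j,k}(\ba),\tau_{j,k}(\bb)$ themselves branch, so $d(\tau_{j,k}(\ba),\tau_{j,k}(\ba')) < d(\tau_{j,k}(\ba),\tau_{j,k}(\bb))$, and likewise with primed roles. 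For $[0] \prec [1]$, $(10^j)^\infty$ and $(10^k)^\infty$ first disagree at position $j+2$, and $[10^j1] < [10^j0]$ gives $(10^j)^\infty < (10^k)^\infty$. \emph{(ii) Correspondence:} $\ba' \in \mathcal{M}_\preceq$ iff $\tau_{j,k}(\ba') \in \mathcal{M}_\le$, because the shifts of $\tau_{j,k}(\ba')$ starting with $0$ are automatically $< \tau_{j,k}(\ba')$ via $[0] < [1]$, and those starting with $1$ are exactly the images $\tau_{j,k}(\sigma^l(\ba'))$. \emph{(iii) Core claim (main obstacle):} if $\ba \in \mathcal{M}_\le \setminus \{0^\infty\}$ satisfies $\ba \le \tau_{j,k}(10^\infty) = 10^k(10^j)^\infty$, then every gap $j_i$ of $\ba = 10^{j_1}10^{j_2}\cdots$ lies in $\{j,k\}$. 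Indeed, the tail $10^{j_i}10^{j_{i+1}}\cdots$ is a shift of $\ba$, hence $\le 10^k(10^j)^\infty$; comparing this tail with $10^k(10^j)^\infty$ at the first disagreement, $j_i > k$ requires $[10^k 0] \le [10^k 1]$ (contradicting $k$ contracting), while $j < j_i < k$ or $j_i < j$ each requires $[10^{j_i}1] \le [10^{j_i}0]$ (contradicting $j_i$ expanding by minimality of $k$ and $j$).

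Given step (iii), step (iv) is order-topological. From (ii)--(iii), $\mathcal{M}_\le \cap \{\ba : \ba \le \tau_{j,k}(10^\infty)\} = \{0^\infty\} \cup \tau_{j,k}(\{\ba' \in \mathcal{M}_\preceq : \ba' \preceq 10^\infty\})$. The map $\tau_{j,k}$ is an order-preserving homeomorphism onto its closed image, so it matches accumulation points; together with $(10^n)^\infty \to 10^\infty$ in $\mathcal{M}_\preceq$, this yields $\tau_{j,k}(10^\infty)$ as an accumulation point of $\mathcal{M}_\le$, so $\bm_\le \le \tau_{j,k}(\bm_\preceq) \le \tau_{j,k}(10^\infty)$. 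Since $0^\infty$ is isolated in $\mathcal{M}_\le$, $\bm_\le = \tau_{j,k}(\bb')$ for some $\bb' \in \mathcal{M}_\preceq$ that is itself an accumulation point, whence $\bb' \succeq \bm_\preceq$; combined with $\tau_{j,k}(\bb') \le \tau_{j,k}(\bm_\preceq)$, we get $\bb' = \bm_\preceq$. The discrete-part formula now drops out: $\ba < \bm_\le$ implies $\ba \le \tau_{j,k}(10^\infty)$, so $\ba = 0^\infty$ or $\ba = \tau_{j,k}(\ba')$ with $\tau_{j,k}(\ba') < \tau_{j,k}(\bm_\preceq)$, i.e., $\ba' \prec \bm_\preceq$.
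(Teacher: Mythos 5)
Your proof is correct and follows essentially the same route as the paper: reduce to the induced order $\preceq$ via $\tau_{j,k}$ after showing that every word of $\mathcal{M}_\le$ below $\tau_{j,k}(10^\infty)$ other than $0^\infty$ is a concatenation of the blocks $10^j$ and $10^k$, and that $\preceq$ is again a cylinder order with $[0]\prec[1]$. Two cosmetic points: in step (i) the images of $\ba,\ba'$ actually share at least $j+2$ (not $j+1$) extra letters --- namely $\tau_{j,k}(a_n)$ followed by the leading $1$ of the next block --- and this is exactly what makes the distance inequality strict; and in step (iii) you should also rule out tails of the form $10^\infty$ (i.e.\ words with finitely many $1$s), which the same comparison $[10^k1]<[10^k0]$ does.
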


\begin{proof}
We have $\bm_\le \in [1]$ because $\mathcal{M}_\le \cap [0] = \{0^\infty\}$, and $\bm_\le \le 10^\infty$ because $(10^n)^\infty \in \mathcal{M}_\le$ for all $n \ge 0$. 
If $[10^j0] < [10^j1]$ for all $j \ge 0$, i.e., $\min\, [1] = 10^\infty$, then $\bm_\le = 10^\infty$.
Otherwise, let $j$ be minimal such that $[10^j1] < [10^j0]$, i.e., $\min [1] \in [10^j]$.
Then $\mathcal{M}_\le \cap [10^j1] = \{(10^j)^\infty\}$ implies that $\bm_\le \in [10^j0]$.
If $[10^k0] < [10^k1]$ for all $k > j$, i.e., $\min\, [10^j0] = 10^\infty$, then $\bm_\le = 10^\infty$.
Otherwise, let $k > j$ be minimal such that $[10^k1] < [10^k0]$, i.e., $\min [10^j0] \in [10^k1]$.
We have $\bm_\le \le \tau_{j,k}(10^\infty)$ because $\tau_{j,k}((10^n)^\infty) \in \mathcal{M}_\le$ for all $n \ge 0$, thus $\bm_\le \in [10^k1]$.
Each word in $\mathcal{M}_\le \cap [10^k1]$ is a concatenation of blocks $10^j$ and~$10^k$, thus $\mathcal{M}_\le \cap [10^k1] \subseteq \tau_{j,k}([1])$.
This proves that $\bm_\le = \tau_{j,k}(\bm_\preceq)$, where $\ba \preceq \bb$ if $\tau_{j,k}(\ba) \le \tau_{j,k}(\bb)$; note that $\bm_\preceq \in [1]$ because $[0] \prec [1]$.
Since $\tau_{j,k}([w0]) \subseteq [\tau_{j,k}(w0)1]$ and $\tau_{j,k}([w1]) \subseteq [\tau_{j,k}(w0)0]$ for all $w \in \{0,1\}^*$, $\preceq$~is a cylinder order.
If $\ba \in \mathcal{M}_\le$ with $\ba < \bm_\le$, then $\ba = 0^\infty$ or $\ba = (10^j)^\infty = \tau_{j,k}(0^\infty)$ or $\ba = \tau_{j,k}(\ba')$ with $\ba' \in \mathcal{M}_\preceq \cap [1]$, $\ba' \prec \bm_\preceq$. 
\end{proof}

The following lemma is used in the construction of a cylinder order $\le$ such that $\bm_\le = \bm$ for a given word~$\bm$.  
Here, $\varepsilon$ denotes the empty word.

\begin{lemma} \label{l:wn}
Let $\bsigma = (\sigma_n)_{n\ge1} \in S^\infty$, $w_n = \sigma_{[1,n]}(0) \cdots \sigma_{[1,1]}(0)$ for $n \ge 0$, with $w_0 = \varepsilon$. 

For all even $n \ge 0$, we have $\sigma_{[1,n]}([0]) \subseteq [w_n0]$, $\sigma_{[1,n]}([1]) \subseteq [w_n1]$. 

For all odd $n \ge 1$, we have $\sigma_{[1,n]}([0]) \subseteq [w_n1]$, $\sigma_{[1,n]}([1]) \subseteq [w_n0]$. 
\end{lemma}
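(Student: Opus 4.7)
The plan is to prove the statement by induction on $n$, tracking the first $|w_n|+1$ letters of $\sigma_{[1,n]}(\ba)$ as a function of $a_1$ alone. The base case $n=0$ is immediate since $w_0=\varepsilon$ and $\sigma_{[1,0]}$ is the identity. The crucial uniform fact I would isolate before the induction is that every $\tau_{j,k}\in S$ maps both $0$ and $1$ to words beginning with the letter $1$; consequently every composition $\sigma_{[1,n]}$ does the same, and in particular $\sigma_{[1,n]}(b)$ begins with $1$ for $b\in\{0,1\}$.

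For the inductive step, I would write $\sigma_{[1,n+1]}(\ba)=\sigma_{[1,n]}(\sigma_{n+1}(\ba))$ with $\sigma_{n+1}=\tau_{j,k}$ and analyse the prefix of $\sigma_{n+1}(\ba)$. In either case $a_1=0$ or $a_1=1$, the word $\sigma_{n+1}(\ba)$ begins with $10^j$; the letter in position $j+2$ is $1$ when $a_1=0$ (since $\sigma_{n+1}(a_2\cdots)$ starts with $1$) and $0$ when $a_1=1$ (the $(j+2)$-nd letter of $10^k$ is $0$, using $k>j$). Applying $\sigma_{[1,n]}$, the first $|\sigma_{[1,n]}(10^j)|=|\sigma_{[1,n+1]}(0)|$ letters are exactly $\sigma_{[1,n+1]}(0)$, and the remaining letters begin with $\sigma_{[1,n]}$ of a tail whose first letter $b$ is determined by $a_1$ (specifically $b=1-a_1$). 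Invoking the induction hypothesis on this tail yields a further prefix of length $|w_n|+1$ equal to $w_n\cdot 0$ or $w_n\cdot 1$, flipped according to the parity of $n$. Using $w_{n+1}=\sigma_{[1,n+1]}(0)\,w_n$, this produces a prefix $w_{n+1}\cdot 0$ or $w_{n+1}\cdot 1$ of $\sigma_{[1,n+1]}(\ba)$ of length $|w_{n+1}|+1$.

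A brief case check (two choices of $a_1$ times two choices of parity of $n$) then confirms that the combined ``parity of $n$'' flip and the ``$a_1\to 1-a_1$'' flip compose to exactly the pattern prescribed for $n+1$ in the lemma, completing the induction. There is no deep obstacle here; the only thing that requires attention is the alternation of parity, which reflects the fact that $\tau_{j,k}$ sends the two cylinders $[0]$ and $[1]$ into cylinders that begin with a common prefix $\sigma_{[1,n+1]}(0)$ and are then distinguished by the opposite letter. Once the uniform ``starts with $1$'' property is in hand, the rest is careful bookkeeping of prefix lengths and of which letter follows $w_n$.
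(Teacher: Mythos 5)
Your proof is correct, and it takes a slightly different route from the paper's: the difference lies in which end of the composition is peeled off in the inductive step. You write $\sigma_{[1,n+1]} = \sigma_{[1,n]} \circ \sigma_{n+1}$ and analyse how the innermost substitution $\sigma_{n+1} = \tau_{j,k}$ determines the letter following the common prefix $10^j$ (namely $1{-}a_1$, since $\sigma_{n+1}(a_2a_3\cdots)$ starts with $1$ and $10^k = 10^j0\cdots$), then apply the induction hypothesis to the resulting tail, which lies in the cylinder $[1{-}a_1]$. The paper instead writes $\sigma_{[1,n+1]} = \sigma_1 \circ \sigma_{[2,n+1]}$, applies the induction hypothesis to the \emph{shifted} sequence $(\sigma_2,\sigma_3,\dots)$, and then uses the one-step cylinder inclusions $\tau_{j,k}([w0]) \subseteq [\tau_{j,k}(w0)1]$ and $\tau_{j,k}([w1]) \subseteq [\tau_{j,k}(w0)0]$ already invoked in the proof of Proposition~\ref{p:main}. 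Your version has the minor advantage that the hypothesis is applied to the same truncated sequence, so the statement need not be quantified over all $\bsigma \in S^\infty$ (which the paper must do, writing ``suppose that it is true for all $\bsigma \in S^\infty$''); the paper's version reuses the single-substitution identity verbatim from the previous proof. Both arguments rest on the same two facts --- every nonempty composition of elements of $S$ sends $\{0,1\}^\infty$ into $[1]$, and $\tau_{j,k}(0)=10^j$ is a proper prefix of $\tau_{j,k}(1)=10^k$ with next letter $0$ --- and your four-way case check of the parity flip against the $a_1 \mapsto 1{-}a_1$ flip is exactly the bookkeeping needed; I verified it reproduces the pattern asserted for $n{+}1$ in all four cases.
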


\begin{proof}
Since $\sigma_{[1,0]}$ is the identity, the statement is trivial for $n = 0$.
Suppose that it is true for all $\bsigma \in S^\infty$ for some even $n \ge 0$. 
Then
\[
\begin{aligned}
\sigma_{[1,n+1]}([0]) & = \sigma_1 \circ \sigma_{[2,n+1]}([0]) \subseteq \sigma_1([\sigma_{[2,n+1]}(0) \cdots \sigma_{[2,2]}(0) 0]) \subseteq [\sigma_{[1,n+1]}(0) \cdots \sigma_{[1,2]}(0) \sigma_1(0) 1], \\
\sigma_{[1,n+1]}([1]) & = \sigma_1 \circ \sigma_{[2,n+1]}([1]) \subseteq \sigma_1([\sigma_{[2,n+1]}(0) \cdots \sigma_{[2,2]}(0) 1]) \subseteq [\sigma_{[1,n+1]}(0) \cdots \sigma_{[1,2]}(0) \sigma_1(0) 0],
\end{aligned}
\]
thus the statement is true for all $\bsigma \in S^\infty$ for $n{+}1$.
The case of odd~$n$ is similar. 
\end{proof}

\begin{proof}[Proof of Theorem~\ref{t:main}]
Let first $\bm_{\le}$ be a cylinder order with $[0] < [1]$. 
By iterating Proposition~\ref{p:main}, we obtain a finite sequence $\sigma_1, \dots, \sigma_h \in S$ such that $\bm_\le = \sigma_{[1,h]}(10^\infty)$ or an infinite sequence $\bsigma = (\sigma_n)_{n\ge1} \in S^\infty$ such that $\bm_\le \in \sigma_{[1,n]}([1])$ for all $n \ge 1$.
Since $\sigma_{[1,n+1]}(1)$ starts with $\sigma_{[1,n]}(1)$ and is longer than $\sigma_{[1,n]}(1)$ for all $n \ge 1$, we have $\bigcap_{n\ge1} \sigma_{[1,n]}([1]) = \{\bsigma(1^\infty)\}$.
Equations \eqref{e:discrete} and \eqref{e:discrete2} respectively follow from Proposition~\ref{p:main}.

Let now $\bsigma = (\sigma_n)_{n\ge1} = (\tau_{j_n,k_n})_{n\ge1} \in S^\infty$.
By Proposition~\ref{p:main} and Lemma~\ref{l:wn}, we have $\bsigma(1^\infty) = \bm_\le$ for all cylinder orders $\le$ satisfying
\begin{equation} \label{e:wnorder}
\begin{array}{cl}
[\sigma_{[1,n]}(10^i)w_n0] < [\sigma_{[1,n]}(10^i)w_n1] & \text{for all even $n\ge0$, $j_{n+1} \ne i < k_{n+1}$}, \\
& \text{and for all odd $n\ge1$, $i \in \{j_{n+1}, k_{n+1}\}$}, \\[.5ex]
{}[\sigma_{[1,n]}(10^i)w_n1] < [\sigma_{[1,n]}(10^i)w_n0] & \text{for all odd $n\ge1$, $j_{n+1} \ne i < k_{n+1}$}, \\
& \text{and for all even $n\ge0$, $i \in \{j_{n+1}, k_{n+1}\}$}.
\end{array}
\end{equation}
Such cylinder orders exist since $\sigma_{[1,n+1]}(1)w_{n+1}$ is longer than $\sigma_{[1,n]}(10^{k_{n+1}})w_n = \sigma_{[1,n+1]}(1)w_n$ for all $n \ge 0$.
To obtain $\bm_\le = \sigma_{[1,h]}(10^\infty)$, $h \ge 0$, we use cylinder orders~$\le$ such that \eqref{e:wnorder} holds only for $n < h$ and such that, for all $i \ge 0$, $[\sigma_{[1,h]}(10^i)w_h0] < [\sigma_{[1,h]}(10^i)w_h1]$ if $h$ is even, $[\sigma_{[1,h]}(10^i)w_h1] < [\sigma_{[1,h]}(10^i)w_h0]$ if $h$ is odd. 
\end{proof}

The sequence $\bm_{\le}$ is thus either eventually periodic or an $S$-adic sequence.
A~word $\ba = a_1a_2\cdots$ is eventually periodic if and only if the factor complexity
\[
p_{a_1a_2\cdots}(n) := \#\{a_{k+1} a_{k+2} \cdots a_{k+n} \,:\, k \ge 0\}
\]
is bounded; see e.g.\ \cite[Theorem~1.3.13]{Lothaire02}.
The smallest complexity for an aperiodic sequence is $p_{\ba}(n)  = n{+}1$, which is attained precisely by Sturmian sequences; see e.g.\ \cite[Theorem~2.1.5]{Lothaire02}.
By \cite[Proposition~2.1]{Creutz-Pavlov23}, all aperiodic words with $\limsup \frac{p_{\ba}(n)}{n} < \frac{4}{3}$ are essentially equal to~$\bsigma(1^\infty)$ with $\bsigma = (\tau_{j_n,k_n})_{n\ge1} \in S^\infty$ (and $k_n \le 2j_n{+}1$ or $(j_n,k_n) = (0,2)$).
Without conditions on $j_n,k_n$, we get the following upper bound for $p_{\bsigma(1^\infty)}(n)$, which is optimal since $p_{\tau_{0,k}^\infty(1^\infty)}(n) = 3n{-}2$ for all $k \ge 2$, $2 \le n \le k$. 

\begin{proposition} \label{p:linear}
Let $\le$ be a cylinder order. 
Then $p_{\bm_\le}(n) \le 3n{-}2$ for all $n \ge 2$.
\end{proposition}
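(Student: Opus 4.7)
My plan is to reduce to bounding the first difference of the factor complexity: on the binary alphabet $\{0,1\}$, $p_{\bm_\le}(n+1) - p_{\bm_\le}(n)$ equals the number of right-special factors of length~$n$ (factors $w$ such that both $w0$ and $w1$ are factors of $\bm_\le$).  Since $p_{\bm_\le}(2) \le 4 = 3 \cdot 2 - 2$ is trivial on a two-letter alphabet, showing that $\bm_\le$ has at most three right-special factors of each length $n \ge 2$ yields $p_{\bm_\le}(n) \le 3n-2$ by induction.

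To bound the right-special factors, I would use Proposition~\ref{p:main} to desubstitute one level: $\bm_\le = \tau_{j,k}(\bm')$, where $\bm'$ is a smallest accumulation point for a related cylinder order (the ultimately periodic case $\bm_\le = \sigma_{[1,h]}(10^\infty)$ from Theorem~\ref{t:main} has bounded complexity and can be handled by a direct computation).  Because both $\tau_{j,k}(0) = 10^j$ and $\tau_{j,k}(1) = 10^k$ begin with the only $1$ in their image, occurrences of the letter~$1$ in $\bm_\le$ exactly mark image boundaries, so any factor of $\bm_\le$ containing a~$1$ admits a unique parsing $w = 0^a\, \tau_{j,k}(v)\, 1 0^b$ with $v$ a factor of $\bm'$ and $0 \le a,b \le k$.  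A case analysis on~$b$ then shows that $w$ is right-special in $\bm_\le$ exactly when $b = j$ and $v$ is right-special in $\bm'$ (treating the empty word $v = \varepsilon$ as vacuously right-special): the two extensions $w0$ and $w1$ correspond to whether the image following $\tau_{j,k}(v)$ is $10^k$ (giving next letter $0$, since $b < k$) or $10^j$ (giving next letter $1$, since $b = j$).

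This reduces the question to counting right-specials of $\bm'$ of suitable lengths, plus the ``boundary'' contribution from the factor $0^n$ and the degenerate case $v = \varepsilon$.  The main obstacle is the length bookkeeping: the constraint $|w| = n$ together with $a \in [0,k]$ in principle allows several right-special $v$'s of different lengths to contribute, and one must show that the total stays bounded by~$3$.  I expect this to follow by unwinding the full $S$-adic structure of $\bm_\le$ from Theorem~\ref{t:main} together with Lemma~\ref{l:wn}, so that the inductive invariant ``at most three right-special factors per length'' is preserved under each step~$\tau_{j_n, k_n}$, with base case a periodic word of the form $(10^k)^\infty$, whose complexity is eventually constant and satisfies the bound directly.
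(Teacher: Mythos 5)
Your reduction to ``at most three right-special factors of each length'' is sound ($p_{\bm_\le}(n+1)-p_{\bm_\le}(n)$ is the number of right-special factors of length~$n$, and $p_{\bm_\le}(2)\le 4$), and the desubstitution of factors through $\tau_{j,k}$ is in the right spirit. But the step you flag as ``the main obstacle'' is a genuine gap, and the inductive invariant you propose cannot close it. The problem is precisely the one you name: a right-special factor of $\bm_\le=\tau_{j,k}(\bm')$ of length $n$ is determined by a pair $(a,v)$ with $v$ right-special in $\bm'$ (or $1v$ right-special, when $a>j$ --- a left-extension condition your case analysis on $b$ alone misses) and $a+|\tau_{j,k}(v)|+1+j=n$, and $|\tau_{j,k}(v)|=|v|+j|v|_0+k|v|_1$ depends on the \emph{content} of $v$, not only on $|v|$. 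Hence right-special factors of $\bm'$ of \emph{different} lengths can have images of the same length and all land at the same $n$: e.g.\ for $\tau_{0,2}$, words $v$ with $|v|+2|v|_1=10$ exist in lengths $2,4,6,8,10$, so a word $\bm'$ with at most three right-special factors per length could a priori contribute five right-special factors to a single length of $\tau_{0,2}(\bm')$. So the single-step implication ``$\bm'$ has $\le 3$ right-specials per length $\Rightarrow$ $\tau_{j,k}(\bm')$ has $\le 3$ per length'' is false for general $\bm'$, and your induction needs a much stronger structural hypothesis on \emph{which} factors are right-special and how their image lengths interleave --- which is exactly the information you have not extracted.

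The paper avoids this by working with the second difference of the complexity instead of the first: by Cassaigne's formula, $p(n{+}2)-2p(n{+}1)+p(n)$ is the number of \emph{strong} minus the number of \emph{weak} bispecial factors of length $n$, and desubstitution identifies all bispecial factors of $\bsigma(1^\infty)$ explicitly as the words $w_{\ell,h}$ (with $\ell=j_{h+1}$ for strong, $\ell=k_{h+1}{-}1$ for weak). The telescoped sum $p(n{+}2)-p(n{+}1)-1$ then equals the number of strong minus weak bispecial factors up to length $n$, and the length interleaving $|w_{k_{h+1}-1,h}|<|w_{j_{h+3},h+2}|$ shows at most two strong bispecial factors are ever ``uncancelled'', giving $p(n{+}1)-p(n)\le 3$ as a \emph{consequence} of the global $S$-adic structure rather than as a step-by-step invariant. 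If you want to salvage your route, you would essentially have to prove this same global classification of (bi)special factors; as written, ``I expect this to follow'' is where the proof actually has to happen.
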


\begin{proof}
The proof is similar to that of \cite[Theorem~17]{Balkova06}; see also \cite[Proposition~4.1]{Creutz-Pavlov23}. 
Recall that the set of factors of a word $\ba = a_1a_2\cdots \in \{0,1\}^\infty$ is $\{a_{k+1} a_{k+2} \cdots a_{k+n} : k,n \ge 0\}$, a factor $v$ of~$\ba$ is \emph{strong bispecial} if all four words $0v0, 0v1, 1v0, 1v1$ are factors of~$\ba$, \emph{weak bispecial} if $0v1, 1v0$ are factors and $0v0, 1v1$ are not factors of~$\ba$.
Then $v$ is a strong/weak bispecial factor of $\bsigma(1^\infty)$, $\bsigma = (\sigma_n)_{n\ge1} = (\tau_{j_n,k_n})_{n\ge1} \in S^\infty$, if and only if $v = 0^{j_1} /\, 0^{k_1-1}$, $k_1 \ge j_1{+}2$, or $v = 0^{j_1}\sigma_1(v'0)$, where $v'$ is a strong/weak bispecial factor of $\lim_{n\to\infty}\bsigma_{[2,n]}(1^\infty)$.
By iterating, we obtain that all strong/weak bispecial factors of $\bsigma(1^\infty)$ are of the form
\[
w_{\ell,h} = \sigma_{[1,0]}(0^{j_1}) \cdots \sigma_{[1,h-1]}(0^{j_h}) \sigma_{[1,h]}(0^\ell) \sigma_{[1,h]}(0) \cdots \sigma_{[1,1]}(0),  
\]
with $h \ge 0$ such that $k_{h+1} \ge j_{h+1}{+}2$, where $\ell = j_{h+1}$ for a strong bispecial factor, $\ell = k_{h+1}{-}1$ for a weak one; here, $w_{\ell,0} = 0^\ell$. 
For any recurrent word $\ba \in \{0,1\}^\infty$, the difference of  $p_{\ba}(n{+}2)-p_{\ba}(n{+}1)$ and $p_{\ba}(n{+}1)-p_{\ba}(n)$ equals the difference of the number of strong and weak bispecial factors of~$\ba$ of length~$n$; see \cite[Proposition~3.2]{Cassaigne97}. 
By telescoping and since $p_{\ba}(1)-p_{\ba}(0)=1$, $p_{\ba}(n{+}2)-p_{\ba}(n{+}1)-1$ is equal to the difference of the number of strong and weak bispecial factors of~$\ba$ up to length~$n$.
Since $|w_{k_{h+1}-1,h}| < |\sigma_{[1,h+1]}(1)| \le |\sigma_{[1,h+2]}(0)| < |w_{j_{h+3},h+2}|$ for all $h \ge 0$, this difference for $\ba = \bsigma(1^\infty)$ is at most~$2$ for all $n \ge 0$; note that $\bsigma(1^\infty)$ is recurrent since $\sigma_{[n,n+1]}(1)$ starts with  $10^{k_n}1$ for all $n \ge 1$. 
Since $p_{\ba}(2) \le 4$, we have thus $p_{\bsigma(1^\infty)} \le 3n{-}2$ for all $n \ge 2$. 
Since $p_{\sigma_{[1,h]}(10^\infty)}(n) \le p_{\bsigma(1^\infty)}(n)$ if $k_{h+1}$ is sufficiently large, we also have $p_{\sigma_{[1,h]}(10^\infty)}(n) \le 3n{-}2$ for all $n \ge 2$, which proves the proposition by Theorem~\ref{t:main}.
\end{proof}

Since the factor complexity is bounded by a linear function, we can apply the results of \cite{Adamczewsk-Bugeaud07}.
For $\beta > 1$, let
\[
\pi_\beta(a_1a_2\cdots) := \sum_{n=1}^\infty \frac{a_n}{\beta^n}.
\]
Recall that Pisot and Salem numbers are algebraic integers $\beta > 1$ with all Galois conjugates (except $\beta$ itself) having absolute value $\le 1$; $\beta$ is a Salem number if a conjugate lies on the unit circle, a Pisot number otherwise. 
In particular, all integers $\beta \ge 2$ are Pisot numbers. 

\begin{proposition} \label{p:transcendental}
Let $\beta \ge 2$ be a Pisot or Salem number, and let $\le$ be a cylinder order on $\{0,1\}^\infty$.  
Then $\pi_\beta(\bm_\le)$ is in $\mathbb{Q}(\beta)$ or transcendental. 
\end{proposition}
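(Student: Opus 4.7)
The plan is to combine the linear factor-complexity bound from Proposition~\ref{p:linear} with the transcendence criterion of Adamczewski and Bugeaud cited just before the statement. First I would split on whether $\bm_\le$ is eventually periodic.

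Case 1: $\bm_\le$ is eventually periodic, say $\bm_\le = u v^\infty$ for finite words $u,v$. Then a direct computation of the geometric series gives
\[
\pi_\beta(\bm_\le) = \pi_\beta(u) + \beta^{-|u|} \frac{\pi_\beta(v)}{1 - \beta^{-|v|}} \in \mathbb{Q}(\beta),
\]
so the conclusion holds trivially.

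Case 2: $\bm_\le$ is aperiodic. By Proposition~\ref{p:linear}, $p_{\bm_\le}(n) \le 3n-2$ for $n \ge 2$, so $\bm_\le$ has at most linear factor complexity. This is precisely the sublinear complexity regime in which the main theorem of \cite{Adamczewsk-Bugeaud07} applies: for $\beta$ a Pisot or Salem number and a bounded integer sequence $\ba$ that is not eventually periodic and has linear factor complexity, the number $\pi_\beta(\ba)$ is transcendental. Applying this with $\ba = \bm_\le \in \{0,1\}^\infty$ yields that $\pi_\beta(\bm_\le)$ is transcendental, completing the dichotomy.

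The main point to verify carefully is not any delicate argument of our own, but that the cited Adamczewski--Bugeaud result is stated in the form we use, namely for Pisot or Salem bases $\beta \ge 2$ and for binary sequences of linear complexity. Once the appropriate statement is located in \cite{Adamczewsk-Bugeaud07}, the proof reduces to the two-line case distinction above; no further combinatorial or number-theoretic input is needed, since Proposition~\ref{p:linear} has already done all the work on the complexity side.
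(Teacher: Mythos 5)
Your proof is correct and is essentially the paper's own argument: the paper proves this proposition in one line by citing Proposition~\ref{p:linear} together with \cite[Theorem~1A]{Adamczewsk-Bugeaud07}, which is exactly the combination you spell out. The only minor caveat is that in your Case~2 the cited theorem yields the disjunction ``in $\mathbb{Q}(\beta)$ or transcendental'' rather than transcendence outright, but since the proposition's conclusion is precisely that disjunction, this does not affect the argument.
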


\begin{proof}
This is a direct consequence of Proposition~\ref{p:linear} and \cite[Theorem~1A]{Adamczewsk-Bugeaud07}. 
\end{proof}

\section{Examples} \label{sec:examples}
\subsection{Lexicographic order}
The classical order on the set of infinite words $A^\infty$ with an ordered alphabet $(A,\le)$ is the lexicographic order, defined by $[wa] <_{\mathrm{lex}} [wb]$ for all $w \in A^*$, $a,b \in A$ with $a < b$.
For $A = \{0,1\}$, we have $\bm_{\le_{\mathrm{lex}}} = 10^\infty = \min \mathcal{M}_{\le_{\mathrm{lex}}} \setminus \{0^\infty\}$. 

By \cite[\S 2]{Parry60}, a sequence $\ba \in \{0,1\}^\infty$ is the greedy $\beta$-expansion of~$1$ for some $\beta \in (1,2)$ if and only if $\ba \in \mathcal{M}_{\le_{\mathrm{lex}}} \setminus \{10^\infty\}$ and $\ba$ is not purely periodic; it is the quasi-greedy $\beta$-expansion of~$1$ for some $\beta \in (1,2]$ if and only if $\ba \in \mathcal{M}_{\le_{\mathrm{lex}}}$ and $\ba$ does not end with~$0^\infty$.
Here, the greedy $\beta$-expansion of~$1$ is the lexicographically largest sequence $\ba \in \{0,1\}^\infty$ with $\pi_\beta(\ba) = 1$, the quasi-greedy $\beta$-expansion of~$1$ is the largest such sequence that does not end with~$0^\infty$. 
By \cite[Theorem~1]{Hubbard-Sparrow90}, we also have that $(0^\infty,\ba)$ is the pair of kneading sequences of a Lorenz map if and only if $\ba \in \mathcal{M}_{\le_{\mathrm{lex}}}$ does not end with $0^\infty$.

\subsection{Alternating lexicographic order} \label{sec:altern-lexic-order}
The alternating lexicographic order is defined by $[wa] <_{\mathrm{alt}} [wb]$ if $a < b$ and $|w|$ is even, or $a > b$ and $|w|$ is odd, where $|w|$ denotes the length of a word $w \in A^*$.
For $A = \{0,1\}$ with $0 < 1$, we have $[11] <_{\mathrm{alt}} [10]$, $[101] >_{\mathrm{alt}} [100]$, $[1001] <_{\mathrm{alt}} [1000]$, thus $\bm_{\le_{\mathrm{alt}}} = \tau_{0,2}(\bm_{\preceq})$ by Proposition~\ref{p:main}, with $\ba \preceq \bb$ if $\tau_{0,2}(\ba) \le_{\mathrm{alt}} \tau_{0,2}(\bb)$. 
Since $\preceq$ is equal to~$\le_{\mathrm{alt}}$, we obtain that $\bm_{\le_{\mathrm{alt}}}$ is the fixed point of $\tau_{0,2}$, i.e., 
\[
\bm_{\le_{\mathrm{alt}}} = \tau_{0,2}(\bm_{\le_{\mathrm{alt}}}) = 100111001001001110011\cdots;
\] 
see also \cite{Allouche83,Allouche-Cosnard83,Dubickas07}.
By Theorem~\ref{t:main}, we have 
\begin{equation} \label{e:discretealt}
\{\ba \in \mathcal{M}_{\le_{\mathrm{alt}}} \,:\, \ba <_{\mathrm{alt}} \bm_{\le_{\mathrm{alt}}}\} = \{(\tau_{0,2}^n(0))^\infty \,:\, n \ge 0\} = \{0^\infty, 1^\infty, (100)^\infty, (10011)^\infty, \dots\}.
\end{equation}

According to \cite{Ito-Sadahiro09}, the $(-\beta)$-expansion of $x \in \big[\frac{-\beta}{\beta+1},\frac{1}{\beta+1}\big)$, $\beta > 1$, is the sequence $a_1a_2\cdots$ with $a_n = \big\lfloor \frac{\beta}{\beta+1}-\beta T_{-\beta}^{n-1}(x)\big\rfloor$, given by the $(-\beta)$-transformation $T_{-\beta}(y) := -\beta y - \big\lfloor \frac{\beta}{\beta+1}-\beta y\big\rfloor$, and the set of $(-\beta)$-expansions is characterized by that of $\frac{-\beta}{\beta+1}$.
By \cite[Theorem~2]{Steiner13}, a sequence $\ba$ is the $(-\beta)$-expansion of $\frac{-\beta}{\beta+1}$ for some $\beta \in (1,2)$ if and only if $\ba \in \mathcal{M}_{\le_{\mathrm{alt}}} \setminus \{(10)^\infty\}$, $\ba >_{\mathrm{alt}} \bm_{\le_{\mathrm{alt}}}$, and $\ba \notin \{w1,w00\}^\infty \setminus \{(w1)^\infty\}$ for all $w \in \{0,1\}^*$ such that $(w1)^\infty >_{\mathrm{alt}} \bm_{\le_{\mathrm{alt}}}$.
Note that continued fractions are also ordered by the alternating lexicographic order on the sequences of partial quotients, and $\bm_{\le_{\mathrm{alt}}}$ occurs e.g.\ in \cite[Remark~11.1]{Kraaikamp-Schmidt-Steiner12}. 

\subsection{Unimodal maps}
Let $A = \{0,1\}$ and define the \emph{unimodal order} by $[w0] <_{\mathrm{uni}} [w1]$ if $|w|_1$ is even, $[w0] >_{\mathrm{uni}} [w1]$ if $|w|_1$ is odd, where $|w|_1$ denotes the number of occurrences of~1 in $w \in \{0,1\}^*$.
Then we have $[11] <_{\mathrm{uni}} [10]$, $[101] <_{\mathrm{uni}} [100]$, and
\[
\bm_{\le_{\mathrm{uni}}} = \tau_{0,1}(\bm_{\le_{\mathrm{alt}}}) = 10111010101110111011101010111010\cdots.
\] 
This is the fixed point of the period-doubling (or Feigenbaum) substitution $0 \mapsto 11$, $1 \mapsto 10$.
The set $\mathcal{M}_{\le_{\mathrm{uni}}}$ is the set of kneading sequences of unimodal maps \cite{Collet-Eckmann80,Milnor-Thurston88}. 

We define the \emph{flipped unimodal order} by $[w0] <_{\mathrm{flip}} [w1]$ if $|w|_0$ is even, $[w0] >_{\mathrm{flip}} [w1]$ if $|w|_0$ is odd, where $|w|_0$ denotes the number of occurrences of~0 in $w \in \{0,1\}^*$. 
Then we have $[11] >_{\mathrm{flip}} [10]$, $[101] <_{\mathrm{flip}} [100]$, $[1001] >_{\mathrm{flip}} [1000]$, $[10001] >_{\mathrm{flip}} [10000]$, and
\[
\bm_{\le_{\mathrm{flip}}} = \tau_{1,3}(\bm_{\le_{\mathrm{alt}}}) =  100010101000100010001010100010101000101010001000\cdots.
\] 
Note that $0 \bm_{\le_{\mathrm{flip}}} = F(\bm_{\le_{\mathrm{uni}}})$, where $F(a_1a_2\cdots) := (1{-}a_1)(1{-}a_2)\cdots$, and we have 
\[
\mathcal{M}_{\le_{\mathrm{uni}}} = 1F(\mathcal{M}_{\le_{\mathrm{flip}}}) \cup \{0^\infty\}.
\]

\subsection{Sturmian sequences} \label{sec:sturmian-sequences}
The set of substitutions $\{\theta_k : k \ge 1\}$ defined by $\theta_k(0) = 0^{k-1}1$, $\theta_k(1) = 0^{k-1}10$, generates the standard Sturmian words; see \cite[Corollary~2.2.22]{Lothaire02}.
Since $\tau_{k-1,k}$ is rotationally conjugate to $\theta_k$, more precisely $\theta_k(w)0^{k-1} = 0^{k-1}\tau_{k-1,k}(w)$ for all $w \in \{0,1\}^*$, the set of substitutions $\{\tau_{k-1,k} : k \ge 1\}$ generates the same shifts as $\{\theta_k : k \ge 0\}$.
Therefore, the limit words of sequences in $\{\tau_{k-1,k} : k \ge 1\}^\infty$ provide elements of all Sturmian shifts. 
For example, the limit word of the sequence $(\tau_{0,1})^\infty$ is the Fibonacci word.

\section{Symmetric alphabets} \label{sec:symm-shift-spac}
For a real number $q > 1$, the set 
\begin{equation} \label{e:Lbeta}
\tilde{\mathcal{L}}_q := \big\{\limsup_{n\to\infty} \|x q^n\| \,:\, x \in \mathbb{R}\big\},
\end{equation}
where $\|.\|$ denotes the distance to the nearest integer, is a multiplicative version of the Lagrange spectrum and was studied in \cite{Dubickas06,Akiyama-Kaneko21}.
If $q$ is an integer, then representing $x = \sum_{k=-\infty}^\infty a_k q^{-k}$ with $a_k \in \mathbb{Z}$, $a_k \ne 0$ for finitely many $k \le 0$, $|\sum_{k=n+1}^\infty a_k q^{n-k}| \le 1/2$, gives that $\|x q^n\| = |\sum_{k=n+1}^\infty a_k q^{n-k}|$; see also Proposition~\ref{p:Lbeta} below.
This leads us to consider
\[
\mathcal{M}^{\mathrm{abs}}_\le = \{s^{\mathrm{abs}}_\le(\ba) \,:\, \ba \in \{0,\pm1\}^\infty\} \quad \mbox{with} \quad s^{\mathrm{abs}}_\le(a_1a_2\cdots) = \sup\nolimits_{n\ge1} \mathrm{abs}(a_na_{n+1}\cdots), 
\]
where 
\[
\mathrm{abs}(\ba) = \begin{cases}\ba & \mbox{if}\ \ba \ge_{\mathrm{lex}} 0^\infty, \\ -\ba & \mbox{if}\ \ba \le_{\mathrm{lex}} 0^\infty,\end{cases}, \quad -(a_1a_2\cdots) = (-a_1)(-a_2)\cdots.
\]
We denote the smallest accumulation point of $\mathcal{M}^{\mathrm{abs}}_\le$ by~$\bm^{\mathrm{abs}}_\le$.

The same proof as for Theorem~\ref{t:ML} shows for all cylinder orders $\le$ on $\{0,\pm1\}^\infty$ that 
\[
\mathcal{L}^{\mathrm{abs}}_\le = \mathcal{M}^{\mathrm{abs}}_\le = \mathrm{cl}\{s^{\mathrm{abs}}_\le(\ba) \,:\, \ba \in \{0,\pm1\}^\infty\, \mbox{purely periodic}\},
\]
where $\mathcal{L}^{\mathrm{abs}}_\le := \{\limsup_{n\ge1} \mathrm{abs}(a_na_{n+1}\cdots) \,:\, a_1a_2\cdots \in \{0,\pm1\}^\infty\}$.

In the following, we assume that a cylinder order on $\{0,\pm1\}^\infty$ is \emph{consistent} (with the natural order on $\{0,\pm1\}$), which means that, for each $w \in \{0,\pm1\}^*$, we have $[w(-1)] < [w0] < [w1]$ or $[w(-1)] > [w0] > [w1]$.
In order to describe~$\bm^{\mathrm{abs}}_\le$, we define maps $\varrho_0, \varrho_1, \varrho_2$ from $\{0,1\}^*$ to $\{0,\pm1\}^*$ by $\varrho_0(\varepsilon) = \varrho_1(\varepsilon) = \varrho_2(\varepsilon) = \varepsilon$ for the empty word $\varepsilon$, and
\[
\begin{aligned}
\varrho_0(w0) & = \begin{cases}\varrho_0(w) 1 & \mbox{if $|w|_0$ is even}, \\ \varrho_0(w) (-1) & \mbox{if $|w|_0$ is odd},\end{cases} & \varrho_0(w1) & = \begin{cases}\varrho_0(w) 10 & \mbox{if $|w|_0$ is even}, \\ \varrho_0(w) (-1)0 & \mbox{if $|w|_0$ is odd},\end{cases} \\ 
\varrho_1(w0) & = \begin{cases}\varrho_1(w) 1 & \mbox{if $|w|_1$ is even}, \\ \varrho_1(w) (-1) & \mbox{if $|w|_1$ is odd},\end{cases} & \varrho_1(w1) & = \begin{cases}\varrho_1(w) 10 & \mbox{if $|w|_1$ is even}, \\ \varrho_1(w) (-1)0 & \mbox{if $|w|_1$ is odd},\end{cases} \\
\varrho_2(w0) & = \begin{cases}\varrho_2(w) 1 & \mbox{if $|w|$ is even}, \\ \varrho_2(w) (-1) & \mbox{if $|w|$ is odd},\end{cases} & \varrho_2(w1) & = \begin{cases}\varrho_2(w) 10 & \mbox{if $|w|$ is even}, \\ \varrho_2(w) (-1)0 & \mbox{if $|w|$ is odd},\end{cases} \label{e:rho2}
\end{aligned}
\]
for all $w \in \{0,1\}^*$, where $|w|$ is the length of a word $w$ and $|w|_i$ the number of occurrences of the letter~$i$ in~$w$. 
As for substitutions, the maps $\varrho_i$ are extended naturally to $\{0,1\}^\infty$. 

\begin{theorem} \label{t:sym}
Let $\bm \in \{0,\pm1\}^\infty$.
Then $\bm  = \bm^{\mathrm{abs}}_\le$ for some consistent cylinder order $\le$ on $\{0,\pm1\}^\infty$ with $0^\infty < 1^\infty$ if and only if $\bm = \sigma(\bm_\preceq)$ for some $\sigma \in \{\varrho_0, \varrho_1, \varrho_2, \tau_{0,1}\}$ and some cylinder order $\preceq$ on $\{0,1\}^\infty$ with $0^\infty \prec 1^\infty$.

If $\bm^{\mathrm{abs}}_\le = \sigma(\bm_\preceq)$, then we can assume that $\ba \preceq \bb$ if and only if $\sigma(\ba) \le \sigma(\bb)$, and we have
\begin{equation} \label{e:discabs}
\{\ba \in \mathcal{M}^{\mathrm{abs}}_\le : \ba < \bm^{\mathrm{abs}}_\le\} = \{0^\infty\} \cup \{\sigma(\ba) : \ba \in \mathcal{M}_\preceq,\, \ba \prec \bm_\preceq\}.
\end{equation}
\end{theorem}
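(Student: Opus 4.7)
The plan is to mirror the proof of Theorem~\ref{t:main} by performing a single ``symmetry-breaking'' step with a map $\sigma \in \{\varrho_0, \varrho_1, \varrho_2, \tau_{0,1}\}$ that reduces the problem on $\{0,\pm 1\}^\infty$ to an instance of the $\{0,1\}^\infty$ setting already covered by Theorem~\ref{t:main}. It is convenient to encode a consistent cylinder order~$\le$ by its sign function $\mathrm{sgn}: \{0,\pm1\}^* \to \{+,-\}$ (with $\mathrm{sgn}(\varepsilon) = +$, since $[0] < [1]$), where $\mathrm{sgn}(w) = +$ means $[w(-1)] < [w0] < [w1]$ and $\mathrm{sgn}(w) = -$ means the reverse; the order $\le$ is then recovered by comparing sequences at the first position of difference.

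\textbf{Forward direction.} Given $\le$, the first step is to localize $\bm^{\mathrm{abs}}_\le$ via an analog of Proposition~\ref{p:main}. I would show that purely periodic words of the form $(\epsilon_1 1 0^{k_1} \cdots \epsilon_r 1 0^{k_r})^\infty$ with $\epsilon_i \in \{\pm 1\}$ lie in $\mathcal{M}^{\mathrm{abs}}_\le$ whenever the signs $\epsilon_i$ are compatible with $\mathrm{sgn}$, and that many such words accumulate near $\bm^{\mathrm{abs}}_\le$. Analysing the initial values $\mathrm{sgn}(1)$, $\mathrm{sgn}(10)$, $\mathrm{sgn}(11)$, etc., I would then argue that there is a unique $\sigma \in \{\tau_{0,1}, \varrho_0, \varrho_1, \varrho_2\}$ such that $\mathcal{M}^{\mathrm{abs}}_\le \cap [\sigma(1)] \subseteq \sigma(\{0,1\}^\infty)$, so in particular $\bm^{\mathrm{abs}}_\le \in \sigma(\{0,1\}^\infty)$. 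The choice of $\sigma$ reflects which parity rule governs the signs of the $\pm 1$ blocks of $\bm^{\mathrm{abs}}_\le$: no flip for $\tau_{0,1}$, parity of the number of 0-blocks for $\varrho_0$, of 1-blocks for $\varrho_1$, or strict alternation for $\varrho_2$.

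\textbf{Reduction and converse.} Having identified $\sigma$, I define $\ba \preceq \bb$ iff $\sigma(\ba) \le \sigma(\bb)$, and verify that $\preceq$ is a cylinder order on $\{0,1\}^\infty$ with $[0] \prec [1]$; this is routine, since $\sigma$ sends distinct cylinders $[wa]$, $a \in \{0,1\}$, into disjoint cylinders of $\{0,\pm 1\}^\infty$. The restriction $\sigma: (\{0,1\}^\infty, \preceq) \to (\sigma(\{0,1\}^\infty), \le)$ is then an order-preserving bijection that induces a bijection $\mathcal{M}_\preceq \leftrightarrow \mathcal{M}^{\mathrm{abs}}_\le \cap \sigma(\{0,1\}^\infty)$; combined with the localization result above, this gives $\bm^{\mathrm{abs}}_\le = \sigma(\bm_\preceq)$ and~\eqref{e:discabs}. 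For the converse, given $\sigma$ and any cylinder order $\preceq$ on $\{0,1\}^\infty$ with $[0] \prec [1]$, I will build $\le$ by prescribing $\mathrm{sgn}$ on short words so as to force the sign pattern of~$\sigma$, and fixing the remaining values so that $\sigma$ becomes an order-embedding of $\preceq$ into $\le$; the same bijection then yields $\bm^{\mathrm{abs}}_\le = \sigma(\bm_\preceq)$.

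\textbf{Main obstacle.} The technical heart is the classification in the forward direction: proving that the sign pattern of the $\pm 1$-blocks in any element of $\mathcal{M}^{\mathrm{abs}}_\le$ close to $\bm^{\mathrm{abs}}_\le$ must follow exactly one of the four parity rules encoded by $\tau_{0,1}, \varrho_0, \varrho_1, \varrho_2$, and no other. I expect this to require careful propagation of $\mathrm{sgn}$: since $\mathrm{abs}(\cdot)$ flips signs according to whether a shift begins with $+1$, $-1$, or a block of zeros followed by such, consistency forces $\mathrm{sgn}$ on the cylinders $[\sigma(w)]$ to obey one of these four rules, and ruling out ``exotic'' rules will be the main subtlety.
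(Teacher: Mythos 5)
Your proposal follows essentially the same route as the paper's proof: the analysis of $\mathrm{sgn}(1)$ and $\mathrm{sgn}(10)$ (i.e., whether $[1\bar{1}]<[10]$ or $[11]<[10]$, and whether $[10\bar{1}]<[100]$ or $[101]<[100]$) is exactly the paper's $2\times 2$ case distinction singling out one of $\varrho_0,\varrho_1,\varrho_2,\tau_{0,1}$, after which the forced successor constraints on $1$ and $\bar{1}$ give $\mathcal{M}^{\mathrm{abs}}_\le\cap[\sigma(1)x]\subseteq\sigma([1])$ and the order is transported back and forth as you describe, including the construction of $\le$ from $\preceq$ for the converse. The only caveats are that the relevant cylinder is one letter longer than $[\sigma(1)]$ (e.g.\ $[10\bar{1}]$ rather than $[10]$ for $\varrho_2$), and that the ``exotic rules'' you fear do not arise, since consistency makes the four sign cases exhaustive and each one directly dictates the block structure.
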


\begin{proof}
Let first $\le $ be a consistent cylinder order on $\{0,\pm1\}^\infty$.
Then the order~$\preceq$ defined by $\ba \preceq \bb$ if $\sigma(\ba) \le \sigma(\bb)$ is a cylinder order for all $\sigma \in \{\varrho_0, \varrho_1, \varrho_2, \tau_{0,1}\}$. 
Indeed, for any $w\in\{0,1\}^*$, we have 
$\sigma([w0])\subset[\sigma(w)xy], \ 
\sigma([w1])\subset[\sigma(w)x0]$, where $x,y\in \{\pm1\}$.

Assume first that $[1\bar{1}] < [10]$; here and in the following, we use the notation $\bar{1} = -1$. 
Then $\mathcal{M}^{\mathrm{abs}}_\le \cap [1\bar{1}] = \{(1\bar{1})^\infty\}$, thus $\bm^{\mathrm{abs}}_\le \in [10]$.
If $[10\bar{1}] < [100]$, then each $1$ in a word in $\mathcal{M}^{\mathrm{abs}}_\le \cap [10\bar{1}]$ is followed by $\bar{1}$ or $0\bar{1}$, and each $\bar{1}$ is followed by $1$ or $01$, i.e., 
\[
\mathcal{M}^{\mathrm{abs}}_\le \cap [10\bar{1}] \subseteq 10\,(\{\bar{1},\bar{1}0\}\{1,10\})^\infty = \varrho_2([1]).
\]
Therefore, $\bm^{\mathrm{abs}}_\le = \varrho_2(\bm_\preceq)$ for the cylinder order~$\preceq$ defined by $\ba \preceq \bb$ if $\varrho_2(\ba) \le \varrho_2(\bb)$ .
If $[101] < [100]$, then each $1$ in a word in $\mathcal{M}^{\mathrm{abs}}_\le \cap [101]$ is followed by $\bar{1}$ or $01$, and each $\bar{1}$ is followed by $1$ or $0\bar{1}$, i.e., 
\[
\begin{aligned}
\mathcal{M}^{\mathrm{abs}}_\le \cap [101] & \subseteq 10((10)^*1(\bar{1}0)^*\bar{1})^\infty \cup 10((10)^*1(\bar{1}0)^*\bar{1})^*(10)^\infty \cup 10((10)^*1(\bar{1}0)^*\bar{1})^*(10)^*1(\bar{1}0)^\infty \\
& = \varrho_0(1(1^*01^*0)^\infty) \cup \varrho_0(1(1^*01^*0)^* 1^\infty) \cup \varrho_0(1(1^*01^*0)^* 1^*0 1^\infty) = \varrho_0([1]).
\end{aligned}
\]
Therefore, we have $\bm^{\mathrm{abs}}_\le = \varrho_0(\bm_\preceq)$  for the cylinder order~$\preceq$ defined by $\ba \preceq \bb$ if $\varrho_0(\ba) \le \varrho_0(\bb)$.
Assume now $[11] < [10]$.
Then $\mathcal{M}^{\mathrm{abs}}_\le \cap [11] = \{1^\infty\}$, thus $\bm_\le \in [10]$.
If $[10\bar{1}] < [100]$, then 
\[
\mathcal{M}^{\mathrm{abs}}_\le \cap [10\bar{1}] \subseteq 10(\bar{1}^*\bar{1}01^*10)^\infty 
\cup 10(\bar{1}^*\bar{1}01^*10)^* \bar{1}^\infty \cup 10(\bar{1}^*\bar{1}01^*10)^*  \bar{1}^*\bar{1}0 1^\infty = \varrho_1([1]),
\]
thus $\bm^{\mathrm{abs}}_\le = \varrho_1(\bm_\preceq)$, with $\preceq$ defined by $\ba \preceq \bb$ if $\varrho_1(\ba) \le \varrho_1(\bb)$.
If $[101] < [100]$, then 
\[
\mathcal{M}^{\mathrm{abs}}_\le \cap [101] \subseteq 10\{1,10\}^\infty = \tau_{0,1}([1]),
\]
thus $\bm^{\mathrm{abs}}_\le = \tau_{0,1}(\bm_\preceq)$ for the cylinder order~$\preceq$ defined by $\ba \preceq \bb$ if $\tau_{0,1}(\ba) \le \tau_{0,1}(\bb)$.
Since $\varrho_0(0^\infty) = \varrho_2(0^\infty) =  (1\overline{1})^\infty$ and $\varrho_1(0^\infty) = \tau_{0,1}(0^\infty) = 1^\infty$, equation \eqref{e:discabs} holds.

Let now $\preceq$ be a cylinder order on $\{0,1\}^\infty$ with $0^\infty \prec 1^\infty$ and $\sigma \in \{\varrho_0, \varrho_1, \varrho_2, \tau_{0,1}\}$.
Then there exists a consistent cylinder order $\le$ on $\{0,\pm1\}^\infty$ satisfying $\sigma(\ba) \le \sigma(\bb)$ if $\ba \preceq \bb$ and $0^\infty < 1^\infty$.
Indeed, for $w \in \{0,\pm1\}^*$ and distinct $a, b \in \{0,\pm1\}$, we set $[wa] < [wb]$ if $\ba' \prec \bb'$ for some $\ba', \bb' \in \{0,1\}^\infty$ with $\sigma(\ba') \in [wa]$, $\sigma(\bb') \in [wb]$; by Lemma~\ref{l:cylinderorder2}, this does not depend on the choice of $\ba', \bb'$. 
Moreover, since $\sigma(\ba) \in [w\bar{1}]$ and $\sigma(\bb) \in [w1]$ is impossible, we have no obstruction to a consistent cylinder order.
Since $0^\infty \prec 1^\infty$, we have $[1\overline{1}] < [10]$ in case $\sigma \in \{\varrho_0, \varrho_2\}$, $[11] < [10]$ in case $\sigma \in \{\varrho_1, \tau_{0,1}\}$.
We can set $[10\bar{1}] < [100]$ in case $\sigma \in \{\varrho_1, \varrho_2\}$ because $([100] \cup [101]) \cap \sigma(\{0,1\})^\infty = \emptyset$, similarly we can set $[101] < [100]$ in case $\sigma \in \{\varrho_0, \tau_{0,1}\}$.
Then we have $\bm^{\mathrm{abs}}_\le = \sigma(\bm_\le)$. 
\end{proof}

\begin{proposition}
Let $\beta \ge 3$ be a Pisot or Salem number, and let $\le$ be a consistent cylinder order on $\{0,\pm1\}^\infty$. 
Then $\pi_\beta(\bm^{\mathrm{abs}}_\le)$ is in $\mathbb{Q}(\beta)$ or  transcendental. 
\end{proposition}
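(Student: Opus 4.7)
The plan is to follow exactly the strategy used for Proposition~\ref{p:transcendental}: combine the structural decomposition of Theorem~\ref{t:sym} with a linear upper bound on the factor complexity of $\bm^{\mathrm{abs}}_\le$, and then invoke the Adamczewski--Bugeaud transcendence criterion \cite[Theorem~1A]{Adamczewsk-Bugeaud07}.

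First I would apply Theorem~\ref{t:sym} to write $\bm^{\mathrm{abs}}_\le = \sigma(\bm_\preceq)$ for some $\sigma \in \{\varrho_0, \varrho_1, \varrho_2, \tau_{0,1}\}$ and some cylinder order $\preceq$ on $\{0,1\}^\infty$ with $0^\infty \prec 1^\infty$. By Proposition~\ref{p:linear}, the source $\bm_\preceq$ satisfies $p_{\bm_\preceq}(n) \le 3n-2$ for all $n \ge 2$.

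The technical core is then to show that $p_{\bm^{\mathrm{abs}}_\le}(n) = O(n)$. For $\sigma = \tau_{0,1}$ this is immediate from the standard bound on the factor complexity of the image of a sequence under a substitution with bounded image lengths. For $\sigma \in \{\varrho_0, \varrho_1, \varrho_2\}$ the map is not literally a substitution, but the image $\sigma(a_k)$ of the $k$-th letter of $\ba = a_1 a_2 \cdots \in \{0,1\}^\infty$ depends only on $a_k$ together with a single parity bit determined by the prefix $a_1 \cdots a_{k-1}$. Realizing $\sigma$ as a letter-to-block coding on the four-letter product alphabet $\{0,1\} \times \mathbb{Z}/2\mathbb{Z}$, one sees that every factor of length $n$ in $\sigma(\ba)$ is specified by a factor of length at most $n+1$ in $\ba$ together with at most two choices of parity state and at most two choices of offset inside the image of the initial letter. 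This gives $p_{\sigma(\ba)}(n) \le 4\, p_{\ba}(n+1)$ and hence $p_{\bm^{\mathrm{abs}}_\le}(n) \le 12n+4$ for all $n \ge 1$.

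With the linear complexity bound in hand and $\beta \ge 3$ Pisot or Salem (so that \cite[Theorem~1A]{Adamczewsk-Bugeaud07} applies to the ternary digit set $\{0,\pm 1\}$), the conclusion that $\pi_\beta(\bm^{\mathrm{abs}}_\le)$ lies in $\mathbb{Q}(\beta)$ or is transcendental follows at once. The only mildly delicate step I anticipate is the complexity bound for the non-substitutive codings $\varrho_i$; the product-alphabet reformulation above turns it into a routine counting argument.
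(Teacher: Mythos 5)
Your overall strategy coincides with the paper's: reduce to a linear bound on $p_{\bm^{\mathrm{abs}}_\le}(n)$ via Theorem~\ref{t:sym} and Proposition~\ref{p:linear}, then invoke \cite[Theorem~1A]{Adamczewsk-Bugeaud07}. Where you diverge is in the technical core, the complexity bound for the non-substitutive codings $\varrho_i$. You lift $\varrho_i$ to a letter-to-block substitution on the product alphabet $\{0,1\}\times\mathbb{Z}/2\mathbb{Z}$ and count: each factor of length $n$ of $\varrho_i(\ba)$ is determined by a factor of $\ba$ of length at most $n{+}1$, a parity state, and an offset, giving $p_{\bm^{\mathrm{abs}}_\le}(n)\le 4\,p_{\bm_\preceq}(n{+}1)\le 12n{+}4$. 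The paper instead projects \emph{down}: with $G(a_1a_2\cdots)=|a_1|\,|a_2|\cdots$ one has $G\circ\varrho_i=\tau_{0,1}$, so $G(\bm^{\mathrm{abs}}_\le)=\tau_{0,1}(\bm_\preceq)$ is itself of the form covered by Theorem~\ref{t:main} and Proposition~\ref{p:linear}, hence $p_{G(\bm^{\mathrm{abs}}_\le)}(n)\le 3n{-}2$; since $G$ is $2$-to-$1$ on factors, $p_{\bm^{\mathrm{abs}}_\le}(n)\le 6n{-}4$. Both routes are valid and the constants are irrelevant for the application; the paper's projection is a little slicker and yields a sharper bound, while your product-alphabet argument is more generic (it would apply to any bounded-memory coding, not just one that happens to project onto a known substitution).

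One small point you gloss over: \cite[Theorem~1A]{Adamczewsk-Bugeaud07} is stated for expansions with nonnegative integer digits, so it does not apply verbatim to the digit set $\{0,\pm1\}$. The paper handles this by adding $1$ to every digit (legitimate since $\beta\ge 3$ allows digits in $\{0,1,2\}$), which preserves the factor complexity and shifts the value by $\frac{1}{\beta-1}\in\mathbb{Q}(\beta)$; you should include this normalization step before invoking the criterion. With that addition your argument is complete.
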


\begin{proof}
Let $G(a_1a_2\cdots) = |a_1|\,|a_2|\cdots$. 
Then $G \circ \varrho_i = \tau_{0,1}$ for all $i \in \{0,1,2\}$, thus $p_{G(\bm^{\mathrm{abs}}_\le)}(n) \le 3n{-}2$ by Proposition~\ref{p:linear}, Theorems~\ref{t:main} and~\ref{t:sym}. 
Moreover, the map $G$ is 2-to-1 from the set of factors of~$\bm^{\mathrm{abs}}_\le$ to the set of factors of $G(\bm^{\mathrm{abs}}_\le)$, thus  $p_{\bm^{\mathrm{abs}}_\le}(n) \le 6n{-}4$.
By \cite[Theorem~1A]{Adamczewsk-Bugeaud07} and by adding 1 to each digit of $\bm^{\mathrm{abs}}_\le$, we obtain that $\pi_\beta(\bm^{\mathrm{abs}}_\le) + \frac{1}{\beta-1}$ is in $\mathbb{Q}(\beta)$ or transcendental, thus also $\pi_\beta(\bm^{\mathrm{abs}}_\le)$ is in $\mathbb{Q}(\beta)$ or transcendental.
\end{proof}

\section{Examples of orders on symmetric shift spaces} \label{sec:exampl-orders-symm}
\subsection{Lexicographic order}
For the lexicographic order on $\{0,\pm1\}^\infty$ (with $-1 < 0 < 1$), we have $[1\bar{1}] < [10]$, $[10\bar{1}] < [100]$, and we obtain that
\[
\bm^{\mathrm{abs}}_{\le_{\mathrm{lex}}} = \varrho_2(\bm_{\le_{\mathrm{alt}}}) = 10\bar{1}1\bar{1}010\bar{1}01\bar{1}10\bar{1}1\bar{1}01\bar{1}10\bar{1}010\bar{1}1\bar{1}010\cdots.
\]
The following proposition relates the Lagrange spectrum $\tilde{\mathcal{L}}_q$, defined in~\eqref{e:Lbeta}, and its smallest accumulation point~$\tilde{\bm}_q$ to $\mathcal{M}^{\mathrm{abs}}_{\le_{\mathrm{lex}}}$ and~$\bm^{\mathrm{abs}}_{\le_{\mathrm{lex}}}$; it slightly improves results of \cite{Dubickas06, Akiyama-Kaneko21}. 

\begin{proposition} \label{p:Lbeta}
We have 
\begin{gather}
\tilde{\mathcal{L}}_2 = \pi_2(\mathcal{M}^{\mathrm{abs}}_{\le_{\mathrm{lex}}}) \cap \big[0,\tfrac{1}{2}\big] \neq \pi_2(\mathcal{M}^{\mathrm{abs}}_{\le_{\mathrm{lex}}}), \quad \tilde{\mathcal{L}}_3 = \pi_3(\mathcal{M}^{\mathrm{abs}}_{\le_{\mathrm{lex}}}), \label{e:23} \\
\pi_q(\mathcal{M}^{\mathrm{abs}}_{\le_{\mathrm{lex}}}) = \tilde{\mathcal{L}}_q \cap \big[0,\tfrac{1}{q-1}\big] \neq \tilde{\mathcal{L}}_q \quad \mbox{for all integers}\ q \ge 4. \label{e:4}
\end{gather}
For all integers $q \ge 2$, we have $\tilde{\bm}_q = \pi_q(\bm^{\mathrm{abs}}_{\le_{\mathrm{lex}}}) = \pi_q(\varrho_2(\bm_{\le_{\mathrm{alt}}}))$ and
\[
\tilde{\mathcal{L}}_q \cap [0,\tilde{\bm}_q) = \pi_q(\mathcal{M}^{\mathrm{abs}}_{\le_{\mathrm{lex}}}) \cap [0,\tilde{\bm}_q) = \{0\} \cup \{\pi_q(\varrho_2(\tau_{0,2}^n(0^\infty))) \,:\, n \ge 0\}. 
\]
\end{proposition}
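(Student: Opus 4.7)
The plan is to encode the orbit $\{xq^n \bmod 1\}_n$ by a signed sequence in $\{0,\pm 1\}^\infty$ and then push Theorems~\ref{t:ML}, \ref{t:sym}, and the computation $\bm^{\mathrm{abs}}_{\le_{\mathrm{lex}}}=\varrho_2(\bm_{\le_{\mathrm{alt}}})$ through the evaluation map~$\pi_q$. For integer $q\ge 2$ and $x\in\mathbb{R}$, define centered iterates $y_n\in[-\tfrac12,\tfrac12]$ by $y_0=x-[x+\tfrac12]$ and $y_{n+1}=qy_n-a_{n+1}$ with $a_{n+1}\in\mathbb{Z}$ the nearest integer to~$qy_n$; then $\|xq^n\|=|y_n|$ and $y_n=\pi_q(a_{n+1}a_{n+2}\cdots)$. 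The digits $a_n$ lie in $\{-\lfloor q/2\rfloor,\dots,\lfloor q/2\rfloor\}$, which coincides with $\{-1,0,1\}$ precisely for $q\in\{2,3\}$. Conversely, any $\ba\in\{0,\pm 1\}^\infty$ with $|\pi_q(\sigma^n\ba)|\le\tfrac12$ for every $n$ arises from $x=\pi_q(\ba)$, and one has $\limsup_n\|xq^n\|=|\pi_q(\ell^{\mathrm{abs}}_{\le_{\mathrm{lex}}}(\ba))|$, the argument being parallel to the proof of Theorem~\ref{t:ML}.

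This framework yields the three set identities as follows. For $q=3$, the admissible digit range $\{-1,0,1\}$ matches and $\pi_3(\{0,\pm 1\}^\infty)=[-\tfrac12,\tfrac12]$ coincides with the range of $\|\cdot\|$, giving $\tilde{\mathcal{L}}_3=\pi_3(\mathcal{M}^{\mathrm{abs}}_{\le_{\mathrm{lex}}})$ directly. For $q=2$, the digit range matches but $\pi_2(\{0,\pm 1\}^\infty)=[-1,1]$ strictly contains $[-\tfrac12,\tfrac12]$, so the intersection with $[0,\tfrac12]$ is needed; the strict inequality $\pi_2(\mathcal{M}^{\mathrm{abs}}_{\le_{\mathrm{lex}}})\neq\tilde{\mathcal{L}}_2$ is witnessed by $\pi_2(1^\infty)=1\notin\tilde{\mathcal{L}}_2$. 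For $q\ge 4$, the image $\pi_q(\{0,\pm 1\}^\infty)\subseteq[-\tfrac{1}{q-1},\tfrac{1}{q-1}]$ is a strict subset of $[-\tfrac12,\tfrac12]$, yielding the equality with the intersection, while $x=2/(q+1)$ furnishes a two-cycle of the centered $q$-shift with $\|xq^n\|=2/(q+1)>1/(q-1)$, proving the strict inequality $\tilde{\mathcal{L}}_q\neq\tilde{\mathcal{L}}_q\cap[0,\tfrac{1}{q-1}]$.

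For the smallest accumulation point, first verify by direct computation on the initial digits of $\bm^{\mathrm{abs}}_{\le_{\mathrm{lex}}}=\varrho_2(\bm_{\le_{\mathrm{alt}}})$ that $\pi_q(\bm^{\mathrm{abs}}_{\le_{\mathrm{lex}}})<\min(\tfrac12,\tfrac{1}{q-1})$ for every $q\ge 2$, so the range cutoffs are immaterial at and below~$\tilde{\bm}_q$. Continuity of $\pi_q$ on the compact set $\{0,\pm 1\}^\infty$ and strict monotonicity on the initial segment $\{\ba\in\mathcal{M}^{\mathrm{abs}}_{\le_{\mathrm{lex}}}:\ba\le\bm^{\mathrm{abs}}_{\le_{\mathrm{lex}}}\}$ then transport the smallest accumulation point of $\mathcal{M}^{\mathrm{abs}}_{\le_{\mathrm{lex}}}$ to the smallest accumulation point of~$\tilde{\mathcal{L}}_q$, yielding $\tilde{\bm}_q=\pi_q(\bm^{\mathrm{abs}}_{\le_{\mathrm{lex}}})=\pi_q(\varrho_2(\bm_{\le_{\mathrm{alt}}}))$. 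The description of $\tilde{\mathcal{L}}_q\cap[0,\tilde{\bm}_q)$ follows by applying $\pi_q$ to \eqref{e:discabs} with $\sigma=\varrho_2$, $\preceq=\le_{\mathrm{alt}}$, and substituting \eqref{e:discretealt}.

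The main obstacle is the detailed comparison between the $\mathrm{abs}$-lexicographic order and the magnitude order induced by $\pi_q$: for $q\ge 3$ these agree up to boundary equalities, but for $q=2$ there are genuine dyadic degeneracies, and one must argue that they lie above the cutoff~$\tfrac12$ and therefore do not affect the accumulation structure below~$\tilde{\bm}_q$. A secondary difficulty is confirming, in the $q\ge 4$ case, both the explicit orbit witness exceeding $1/(q-1)$ and the fact that no element of $\mathcal{M}^{\mathrm{abs}}_{\le_{\mathrm{lex}}}$ can attain such a value under~$\pi_q$.
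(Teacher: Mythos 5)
Your route is essentially the paper's: encode $\|xq^n\|$ by the centered (nearest-integer) $q$-ary expansion, observe that the digit alphabet is $\{0,\pm1,\dots,\pm\lfloor q/2\rfloor\}$, use $\mathcal{L}^{\mathrm{abs}}_{\le_{\mathrm{lex}}}=\mathcal{M}^{\mathrm{abs}}_{\le_{\mathrm{lex}}}$ to pass between $\limsup$ and $\sup$, take the same witnesses $\pi_2(1^\infty)=1$ and $x=\frac{2}{q+1}$, and transport the discrete part through \eqref{e:discabs} and \eqref{e:discretealt}. The one genuine gap is the inclusion $\tilde{\mathcal{L}}_q\cap\big[0,\frac{1}{q-1}\big]\subseteq\pi_q(\mathcal{M}^{\mathrm{abs}}_{\le_{\mathrm{lex}}})$ for $q\ge4$: your observation that $\pi_q(\{0,\pm1\}^\infty)\subseteq\big[-\frac{1}{q-1},\frac{1}{q-1}\big]$ gives only the \emph{opposite} inclusion, and your flagged ``secondary difficulty'' (that no element of $\mathcal{M}^{\mathrm{abs}}_{\le_{\mathrm{lex}}}$ exceeds $\frac{1}{q-1}$) is the easy direction. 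What must actually be shown is that every admissible sequence $\ba$ over the full alphabet that uses a digit of modulus $\ge2$ satisfies $\pi_q(s^{\mathrm{abs}}_{\le_{\mathrm{lex}}}(\ba))>\frac{1}{q-1}$, so that nothing from the larger alphabet lands below the cutoff. The crude bound coming from the first digit of the sup-word alone, namely $\frac{2}{q}-\frac{\lfloor q/2\rfloor}{q(q-1)}$, equals exactly $\frac{1}{3}=\frac{1}{q-1}$ at $q=4$, so it does not suffice; the paper instead uses that the sup-word is maximal in its own shift orbit, which forces it to be $\ge_{\mathrm{lex}}(2\overline{2})^\infty$ and hence $\pi_q(s^{\mathrm{abs}}_{\le_{\mathrm{lex}}}(\ba))\ge\pi_q((2\overline{2})^\infty)=\frac{2}{q+1}>\frac{1}{q-1}$. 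Adding that one lemma closes your outline; the rest of your argument, including the handling of the cutoffs below $\tilde{\bm}_q$ via weak monotonicity and continuity of $\pi_q$ on $\mathcal{A}_q$, matches the paper.
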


\begin{proof}
As mentioned at the beginning of Section~\ref{sec:symm-shift-spac}, for integer $q \ge 2$, $\|x q^n\|$ can be determined by a symmetric $q$-expansion of~$x$.
We can assume w.l.o.g.\  $|x| \le \frac{1}{2}$.
Let 
\[
\mathcal{A}_q := \big\{a_1a_2\cdots \in A_q^\infty \,:\, |\pi_q(a_ka_{k+1}\cdots)| \le \tfrac{1}{2} \ \mbox{for all}\ k \ge 1\big\},\ \mbox{with}\ A_q := \{0, \pm1, \dots, \pm\lfloor q/2\rfloor\}.
\]
For each $x \in [-\frac{1}{2},\frac{1}{2}]$, we obtain a sequence $\ba = a_1a_2\cdots \in \mathcal{A}_q$ satisfying $x = \pi_q(\ba)$ by taking $a_k = \lfloor q \tilde{T}_q^{k-1}(x) + \frac{1}{2}\rfloor$ where $\tilde{T}_q(y) := q y - \lfloor q y + \frac{1}{2}\rfloor$.
Then
\[
\|x q^n\| = |\pi_q(a_{n+1}a_{n+2}\cdots)| = \pi_q(\mathrm{abs}(a_{n+1}a_{n+2}\cdots)).
\]
Note that $\ba \le_{\mathrm{lex}} \bb$ implies $\pi_q(\ba) \le \pi_q(\bb)$ for all $\ba, \bb \in \mathcal{A}_q$. 
Since $\mathcal{L}^{\mathrm{abs}}_{\le_{\mathrm{lex}}} = \mathcal{M}^{\mathrm{abs}}_{\le_{\mathrm{lex}}}$ (and a similar relation holds for larger alphabets), we obtain that
\[
\tilde{\mathcal{L}}_q = \{\pi_q(s^{\mathrm{abs}}_{\le_{\mathrm{lex}}}(\ba)) \,:\, \ba \in \mathcal{A}_q\}.
\]
For $q \in \{2,3\}$, we have $A_q = \{0,\pm1\}$, thus $\tilde{\mathcal{L}}_q \subseteq \pi_q(\mathcal{M}^{\mathrm{abs}}_{\le_{\mathrm{lex}}})$.
For $q \ge 3$, we have $\{0,\pm1\}^\infty \subseteq \mathcal{A}_q$, thus $\pi_q(\mathcal{M}^{\mathrm{abs}}_{\le_{\mathrm{lex}}}) \subseteq \tilde{\mathcal{L}}_q$.
Since $\pi_2(\mathcal{M}^{\mathrm{abs}}_{\le_{\mathrm{lex}}}) \cap [0,\frac{1}{2}] \subseteq \tilde{\mathcal{L}}_2$ and $1 = \pi_2(1^\infty) \in \pi_2(\mathcal{M}^{\mathrm{abs}}_{\le_{\mathrm{lex}}}) \setminus \tilde{\mathcal{L}}_2$, this proves~\eqref{e:23}. 
For $q \ge 4$, we have $\pi_q(s^{\mathrm{abs}}_{\le_{\mathrm{lex}}}(\ba)) \ge \pi_q((2\overline{2})^\infty) = \frac{2}{q+1} > \frac{1}{q-1}$ for all $\ba \in A_q^\infty \setminus \{0,\pm1\}^\infty$, thus $\tilde{\mathcal{L}}_q \cap [0,\frac{1}{q-1}] \subseteq\pi_q(\mathcal{M}^{\mathrm{abs}}_{\le_{\mathrm{lex}}})$.
Together with $\frac{2}{q+1} \in \tilde{\mathcal{L}}_q \setminus \pi_q(\mathcal{M}^{\mathrm{abs}}_{\le_{\mathrm{lex}}})$, we obtain~\eqref{e:4}. 
Since $\pi_q$ is order-preserving on~$\mathcal{A}_q$, we obtain that $\tilde{\bm}_q = \pi_q(\bm^{\mathrm{abs}}_{\le_{\mathrm{lex}}})$ and that $\tilde{\mathcal{L}}_q$ and $\pi_q(\mathcal{M}^{\mathrm{abs}}_{\le_{\mathrm{lex}}})$ agree on $[0,\tilde{\bm}_q)$.
Since $\{\ba \in \mathcal{M}^{\mathrm{abs}}_{\le_{\mathrm{lex}}} \,:\, \ba < \bm^{\mathrm{abs}}_{\le_{\mathrm{lex}}}\}$ is equal to $\{0^\infty\} \cup \{\varrho_2(\tau_{0,2}^n(0^\infty)) \,:\, n \ge 0\}$ by Theorem~\ref{t:sym} and \eqref{e:discretealt}, this completes the proof of the proposition.
\end{proof}

\subsection{Alternating lexicographic order}
For the alternating lexicographic order on $\{0,\pm1\}^\infty$ (with $-1 < 0 < 1$), we have $[11] <_{\mathrm{alt}} [10]$ and $[10\bar{1}] <_{\mathrm{alt}} [100]$, 
\[
\bm^{\mathrm{abs}}_{\le_{\mathrm{alt}}} = \varrho_1(\bm_{\le_{\mathrm{alt}}}) = 10\bar{1}\bar{1}\bar{1}010\bar{1}01110\bar{1}\bar{1}\bar{1}01110\bar{1}010\bar{1}\bar{1}\bar{1}010\cdots.
\]

\subsection{Bimodal order}
Similarly to the unimodal order, we define the \emph{bimodal order} on $\{0,\pm1\}^\infty$ by $[wa] <_{\mathrm{bi}} [wb]$ if $a < b$ (with $-1 < 0 < 1$) and $|w|_1 + |w|_{-1}$ is even, or $a > b$ and $|w|_1 + |w|_{-1}$ is odd.
Then $\bm^{\mathrm{abs}}_{\le_{\mathrm{bi}}} = \tau_{0,1}(\bm_{\le_{\mathrm{alt}}}) = \bm_{\le_{\mathrm{uni}}}$. 
We get the same result for the order defined by $[wa] < [wb]$ if $a < b$ and $|w|_1$ is even, or $a > b$ and $|w|_1$ is odd.

We also define the \emph{flipped bimodal order} on $\{0,\pm1\}^\infty$ by $[wa] <_{\mathrm{biflip}} [wb]$ if $a < b$ and $|w|_0$ is even, or $a > b$ and $|w|_0$ is odd.
Then 
\[
\bm^{\mathrm{abs}}_{\le_{\mathrm{biflip}}} = \varrho_0(\bm_{\le_{\mathrm{alt}}}) = 101\bar{1}1010101\bar{1}101\bar{1}101\bar{1}1010101\bar{1}1010\cdots..
\] 

\subsection{Other orders}
For $\be \in \{\pm1\}^{\infty}$, we define a cylinder order~$\le_{\be}$ on $\{0,\pm1\}^\infty$ by 
\[
\ba \le_{\be} \bb \quad \mbox{if}\ \be \cdot \ba \le_{\mathrm{lex}} \be \cdot \bb,
\]
where $(e_1e_2\cdots) \cdot (a_1a_2\cdots) = (e_1a_1) (e_2a_2) \cdots$. 
We know from Proposition~\ref{p:transcendental} that $\pi_\beta(\bm^{\mathrm{abs}}_{\le_{\be}})$ is in $\mathbb{Q}(\beta)$ or transcendental for all Pisot or Salem numbers~$\beta$. 
However, here the value of $\pi_\beta(\be \cdot \bm^{\mathrm{abs}}_{\le_{\be}})$, which is the the smallest accumulation point of $\{\limsup_{n\to\infty} |\sum_{k=n+1}^\infty \frac{e_ka_k}{\beta^{k-n}}| \,:\, a_1a_2\cdots \in \{0,\pm1\}^\infty\}$
when $e_1=1$ and $\beta \ge 3$, is more relevant. 
If $\be$ is periodic with period~$k$, then $p_{\be\cdot\ba}(n) \le k\, p_{\ba}(n)$, hence $\pi_\beta(\be \cdot \bm^{\mathrm{abs}}_{\le_{\be}})$ is also in $\mathbb{Q}(\beta)$ or transcendental for all Pisot or Salem numbers~$\beta$. 
We do not know whether the same result holds when $\be$ is aperiodic. 

\bibliographystyle{amsalpha}
\bibliography{order}
\end{document}